\theoremstyle{plain}
\newtheorem{theorem}{Theorem} \setcounter{theorem}{-1}
\newtheorem{definition}{Definition}
\newtheorem{prop}{Proposition} \setcounter{prop}{-1}
\newtheorem{proposition}[definition]{Proposition}
\newtheorem{fact}[definition]{Fact}
\newtheorem{lemma}[definition]{Lemma}
\newtheorem{property}[definition]{Property}
\newtheorem{assumption}[definition]{Assumption}
\newtheorem{remark}[definition]{Remark}
\theoremstyle{remark}
\newcommand{\myP}{\mathcal{P}}
\newcommand{\Id}{\mathrm{Id}}
\newcommand{\R}{\mathbb{R}}
\newcommand{\myB}{\mathscr{B}}
\newcommand{\myC}{\mathscr{C}}
\newcommand{\D}{\mathscr{D}}
\newcommand{\M}{\mathbf{H}^m}
\newcommand{\Op}{{\mathrm Op}}
\newcommand{\J}{\mathcal{J}}
\newcommand{\Z}{\mathbb{Z}}
\newcommand{\N}{\mathbb{N}}
\newcommand{\sR}{\text{sR}}
\newcommand{\bqn}{\begin{equation}}
\newcommand{\eqn}{\end{equation}}
\numberwithin{equation}{section}
\begin{document}

\title{Quantum limits of sub-Laplacians via joint spectral calculus}

\emsauthor{1}{Cyril Letrouit}{C.~Letrouit}


\emsaffil{1}{Department of Mathematics, Massachussets Institute of Technology, Cambridge MA 02139, USA; and Département de mathématiques et applications, \'Ecole normale supérieure and CNRS, Université Paris Sciences et Lettres, 45 rue d'Ulm, 75005 Paris, France \email{letrouit@mit.edu}}

\classification[35S05, 35P05, 81Q10]{35H10}

\keywords{Eigenfunction, hypoelliptic, sub-Laplacian, microlocal analysis}

\begin{abstract}
We establish two results concerning the Quantum Limits (QLs) of some sub-Laplacians. First, under a commutativity assumption on the vector fields involved in the definition of the sub-Laplacian, we prove that it is possible to split any QL into several pieces which can be studied separately, and which come from well-characterized parts of the associated sequence of eigenfunctions. 

Secondly, building upon this result, we study in detail the QLs of a particular family of sub-Laplacians defined on products of compact quotients of Heisenberg groups. We express the QLs through a disintegration of measure result which follows from a natural spectral decomposition of the sub-Laplacian in which harmonic oscillators appear.

Both results are based on the construction of an adequate elliptic operator commuting with the sub-Laplacian, and on the associated joint spectral calculus. They illustrate the fact that, because of the possible high degeneracies in the spectrum, the spectral theory of sub-Laplacians is very rich.
\end{abstract}

\maketitle

\section{Introduction and main results} \label{s:intro}

\subsection{Motivation}

The main goal of this paper is to establish some properties of the eigenfunctions of families of hypoelliptic operators in the high-frequency limit.  A typical problem is the description of the Quantum Limits (QL) of the operator, i.e., measures that are weak limits of a subsequence of squares of eigenfunctions. All the operators we consider in the sequel are sub-Laplacians, and they are in particular hypoelliptic as we will see shortly.

\subsubsection{Sub-Laplacians}

Let us now recall the general definition of a sub-Laplacian. Let $n\in\mathbb{N}^*$ and let $M$ be a smooth, connected and compact manifold of dimension $n$ without boundary. Let $X_1,\ldots,X_N$ be smooth vector fields on $M$ that are not necessarily independent but which satisfy the so-called H\"ormander bracketing condition 
\begin{equation}\label{e:hormander}
\begin{split}
&\text{The vector fields $X_1,\ldots,X_N$ and their iterated Lie brackets $[X_i,X_j],$}\\
 & \text{$[X_i,[X_j,X_k]]$, etc. span the tangent space $T_xM$ at every point $x\in M$.}
\end{split}
\end{equation}
Let also $\mu$ be  a smooth (positive) volume form on $M$. We consider the operator
\begin{equation} \label{e:defsubLapl}
\Delta_{\sR}=-\sum_{i=1}^N X_i^*X_i=\sum_{i=1}^N(X_i^2+\text{div}_\mu(X_i)X_i)
\end{equation}
where $X_i^*$ is the transpose of $X_i$ in $L^2(M,\mu)$. The ``sR'' index in $\Delta_{\sR}$ stands for ``sub-Riemannian''. The operator $\Delta_{\sR}$ is non-positive and self-adjoint. Then
\[ \mathcal{D}=\text{Span}(X_1,\ldots,X_N) \subset TM \]
is called the distribution. We note that Laplace--Beltrami operators are particular kinds of sub-Laplacians.\footnote{To see that the Laplace--Beltrami operator on a Riemannian manifold $(M,g)$ is a sub-Laplacian, take $\mu$ to be the Riemannian volume and take $X_1(q),\ldots,X_N(q)$ spanning $T_qM$ for any $q$ (without necessarily being independent, since this is not always possible globally), with lengths adjusted in a way that the principal symbol of $-\Delta_g$ coincides with the principal symbol of $\sum_{i=1}^N X_i^*X_i$. One can check that $\Delta_g=-\sum_{i=1}^N X_i^*X_i$.}

Under the assumption \eqref{e:hormander}, $\Delta_{\sR}$ is hypoelliptic (see \cite{hormander1967hypoelliptic}) and has a compact resolvent: this follows from subelliptic regularity estimates of the form
\[ \exists\, r\in\N, \exists\, C>0, \qquad \lVert u\rVert_{H^{1/r}(M)}^2\leq C\big(\lVert\Delta_{\sR}u\rVert^2_{L^2(M)}+\lVert u\rVert_{L^2(M)}^2\big) \]
(see \cite[Theorem 17 and estimate (17.20)]{rothschild76}). Thus there exists a sequence of (complex-valued) eigenfunctions of $-\Delta_{\sR}$ associated to the eigenvalues in increasing order $0=\lambda_1< \lambda_2\leq \cdots$ (with $\lambda_j\rightarrow +\infty$ as $j\rightarrow +\infty$) which is orthonormal for the $L^2(M,\mu)$ scalar product. 

\subsubsection{Quantum Limits} 
The main purpose of this paper is to understand the weak limits of the sequence of probability measures $|\varphi_k|^2d\mu$ where $(\varphi_k)_{k\in\N^*}$ is a sequence of normalized eigenfunctions of $-\Delta_{\sR}$ associated to eigenvalues tending to $+\infty$, for particular sub-Laplacians $\Delta_{\sR}$.

There is a phase space extension of these weak limits whose behaviour is also of interest. Let us recall the following definition (see~\cite{gerard1991microlocal}).

\begin{definition} \label{d:defmdm}
 Let $(u_k)_{k\in\N^*}$ be a bounded sequence in $L^2(M)$ and weakly converging to $0$. A microlocal defect measure of $(u_k)_{k\in\N^*}$ is any Radon measure $\nu$ on $S^*M$ for which there exists an extraction $\sigma:\N^*\rightarrow \N^*$ such that for any $0$-th order pseudodifferential operators $A$ with principal symbol $a=\sigma_P(A)$ (see Appendix \ref{a:pseudo}), there holds
 \begin{equation*}
 (Au_{\sigma(k)},u_{\sigma(k)})\underset{k\rightarrow +\infty}{\longrightarrow} \int_{S^*M}ad\nu.
 \end{equation*}
 Here, $(\cdot,\cdot)$ denotes the $L^2(M,\mu)$ scalar product.
 \end{definition}
Microlocal defect measures are useful tools for studying the (asymptotic) concentration and oscillation properties of sequences; note they are necessarily non-negative (see~\cite{gerard1991microlocal}).
\begin{definition}
Given a sequence $(\varphi_k)_{k\in\N^*}$ of eigenfunctions of $-\Delta_{\sR}$ with $\lVert\varphi_k\rVert_{L^2(M,\mu)}=1$, we call Quantum Limit (QL) any microlocal defect measure of $(\varphi_k)_{k\in\N^*}$.
\end{definition}
\begin{remark}
Since $\varphi_k$, $k\in\N^*$ is normalized, any QL is a probability measure on $S^*M$.
\end{remark}

For any Riemannian manifold $(M,g)$, it is well known that any QL $\nu$ of the Laplace--Beltrami operator $\Delta_g$ is invariant under the geodesic flow $\exp(t\vec{G})$. Here $G=\sigma_P(\sqrt{-\Delta_g})$ and $\vec{G}$ is the associated Hamiltonian vector field (the geodesic vector field), and the claim is that $\exp(t\vec{G})\nu=\nu$ for any $t\in \R$. To prove it, we note that for any sequence $(\varphi_k)_{k\in\N^*}$ consisting of normalized eigenfunctions of $-\Delta_g$, there holds
\begin{equation} \label{e:compRiem}
(A\sqrt{-\Delta_g}\varphi_k,\varphi_k)_{L^2}-(\sqrt{-\Delta_g}A\varphi_k,\varphi_k)_{L^2}=0
\end{equation}
for any $t\in\R$, any $k\in \mathbb{N^*}$ and any $0$-th order pseudodifferential operators $A$. It follows from the commutation rule for pseudodifferential operators that $\int_{S^*M}\{\sigma_P(A),G\}d\nu=0$, which in turn implies $\smash{\vec{G}\nu=0}$ and  $\smash{\exp(t\vec{G})\nu=\nu}$ for any $t\in\R$.

The structure and the invariance properties of the QLs of sub-Laplacians are more complicated than that of Riemannian Laplacians. To see it, let us consider a general sub-Laplacian $\Delta_{\sR}$, the principal symbol 
\[ g^*=\sigma_P(-\Delta_{\sR}), \] 
and the associated sub-Riemannian geodesic flow $\vec{g}^*$. The invariance of QLs of $\Delta_{\sR}$ under the sub-Riemannian geodesic flow $\vec{g}^*$ is still true, but it does not provide any information about the part of the QL lying in $(g^*)^{-1}(0)$ since the geodesic flow is stationary at such points.  Indeed, we note that the above computation \eqref{e:compRiem} does not work anymore for general sub-Laplacians since $\sqrt{-\Delta_{\sR}}$ is not a pseudodifferential operator near its characteristic cone $(g^*)^{-1}(0)$ (due to the blow-up of some derivatives of $\sqrt{g^*}$).

We denote by 
\[ \Sigma=(g^{*})^{-1}(0)=\mathcal{D}^{\perp}\subset T^*M \] 
the characteristic cone (where $\perp$ is in the sense of duality). We make the identification 
\begin{equation} \label{e:identification}
S^*M=U^*M\cup S\Sigma
\end{equation} 
where $S^*M$ is the cosphere bundle obtained by taking the quotient of the fibers of $T^*M$ by $\R^+$, $U^*M=\{g^*=1\}$ has any of its points identified with some point in $S^*M$ by homogeneity, and the remaining elements in $S^*M$ are the directions along which $g^*=0$. This last set can be identified with $S\Sigma$, the quotient of $\Sigma$ by $\R^+$.

In the sequel, we denote by $\mathscr{P}(E)$ the set of Radon probability measures on a given Hausdorff space $E$. The following result due to Colin de Verdi\`ere, Hillairet and Tr\'elat, which is valid for any sub-Laplacian $\Delta_{\sR}$, is the starting point of our analysis.
\begin{prop}\cite[Theorem B and Remark 1.4]{de2018spectral} \label{p:analogtheoremB}
Let $(\varphi_k)_{k\in\mathbb{N}^*}$ be an $L^2(M,\mu)$-normalized sequence of eigenfunctions of $-\Delta_{\sR}$ with eigenvalues $(\lambda_k)_{k\in\mathbb{N}^*}$ labeled in non-decreasing order and tending to $+\infty$. Let $\nu$ be a QL associated with $(\varphi_{k})_{k\in\mathbb{N}^*}$. Using the identification \eqref{e:identification}, the probability measure $\nu$ can be written as the sum 
\begin{equation} \label{e:nu0nuinfty}
\nu=\beta\nu_0+(1-\beta)\nu_\infty
\end{equation} 
of two mutually singular measures with $\nu_0,\nu_\infty \in\mathscr{P}(S^*M)$, $\beta\in[0,1]$ and
\begin{enumerate}[(1)]
\item $\nu_0(S\Sigma)=0$ and $\nu_0$ is invariant under the sub-Riemannian geodesic flow $\vec{g}^*$;
\item $\nu_\infty$ is supported on $S\Sigma$. 
\end{enumerate}
Moreover, if $(\varphi_k)_{k\in\mathbb{N}^*}$ is an orthonormal basis of $L^2(M,\mu)$, there exists an increasing sequence $(k_\ell)_{\ell\in\mathbb{N}}$ of positive integers, of density $1$, i.e.,
\[ \lim_{\alpha\rightarrow +\infty} \frac{\# \{\ell\in \N \mid k_\ell\leq \alpha\}}{\alpha}=1, \]
such that, if $\nu$ is a QL associated with a subsequence of $(\varphi_{k_\ell})_{\ell\in\mathbb{N}}$, then the support of $\nu$ is contained in $S\Sigma$, i.e., $\beta=0$ in the previous decomposition.\footnote{The proof of this last fact follows from the results in \cite{de2018spectral}, although it is not explicitely stated there. Let us sketch the proof. By \cite[Proposition 4.3]{de2018spectral}, we know that the microlocal Weyl measure of $\Delta_{\sR}$ is supported in $S\Sigma$. It then follows from \cite[Corollary 4.1]{de2018spectral} that for every $A\in\Psi^0(M)$ whose principal symbol vanishes on $\Sigma$, there holds $V(A)=0$, where $V(A)$ is the variance introduced in \cite[Definition~4.1]{de2018spectral}. Finally, following the proof of Theorem B(2) in \cite{de2018spectral}, we get the result.}
\end{prop}
The last part of Proposition \ref{p:analogtheoremB} shows that $\nu_\infty$ is the ``main part'' of the QL. In \cite{de2018spectral}, its invariance properties are determined in the following particular case (we do not recall here the definitions of three-dimensional contact sub-Laplacian and Reeb flow which can be found in \cite{de2018spectral}):
\begin{theorem}
If $\Delta_{\sR}$ is a three-dimensional contact sub-Laplacian, then $\nu_\infty$ is invariant under the lift of the Reeb flow to $S\Sigma$.
\end{theorem}
Our main results, namely Theorems  \ref{t:mainQL}, \ref{t:conversereplacement} and \ref{t:goodconversereplacement}, establish invariance properties of $\nu_\infty$ for other sub-Laplacians, showing a richer behavior than in the three-dimensional contact case.

\begin{remark}
Explicit examples of QLs for which $\beta\neq 0$ are given in \cite[Proposition 3.2(1)]{de2018spectral}.
\end{remark}

\begin{remark} \label{r:euclstrat}
In this paper, we take the Euclidean point of view, in that we do not use pseudodifferential calculus adapted to the stratified Lie algebra, a calculus that is frequently involved with sub-Laplacians. Nevertheless, our results share connexions with important problems in non-commutative Fourier analysis, as explained in Section \ref{s:bibliocomments}.
\end{remark}

\subsection{A preliminary result under a commutativity assumption}
The description of QLs for general sub-Laplacians is a difficult problem, as is the case for Riemannian Laplacians (see Section \ref{s:bibliocomments}). 

In this paper, we restrict our attention to particular sub-Laplacians, for which, despite their lack of ellipticity, techniques of joint (elliptic) spectral calculus apply in the presence of additional commutativity assumptions. 

The preliminary result which we present in this section holds under a commutativity assumption which we now introduce.

\subsubsection{The commutativity assumption}

Let us fix a sub-Laplacian $\Delta_{\sR}$ on $M$ as in \eqref{e:defsubLapl}. In this paper, we take the notation $\N=\{0,1,\ldots\}$ for the set of non-negative integers. We make the following assumption:

\medskip 

\begin{assumption} \label{a:a}There exist $m\in\N$ and $Z_1,\ldots,Z_m$ smooth global vector fields on $M$ such that:
\begin{enumerate}[(i)]
\item At any point $x\in M$ where $\mathcal{D}_x\neq T_xM$, the vector fields $Z_1(x),\ldots,Z_m(x)$ complete $\mathcal{D}_x$ into a basis of $T_xM$ (in particular, they are independent and thus do not vanish at these points);
\item For any $1\leq i,j\leq m$, there holds $[\Delta_{\sR},Z_i^*Z_i]=[Z_i^*Z_i,Z_j^*Z_j]=0$.
\end{enumerate}
\end{assumption}
Point (ii) is a strong assumption, sometimes related to the action of a group of symmetries. Assumption \ref{a:a} is satisfied for example in the following cases:
\begin{itemize}
\item For (elliptic) Laplace--Beltrami operators. In this case, $m=0$.
\item For sub-Laplacians on (quotients of) step $2$ Carnot groups. A Carnot group is a simply connected nilpotent Lie  group $G$ which is stratified of step $2$,  in the sense that its  left-invariant  Lie  algebra $\mathfrak g$, assumed to be real-valued and of finite dimension, is endowed with a vector space decomposition $\mathfrak g=\mathfrak v\oplus \mathfrak z$ where $ \mathfrak z=[\mathfrak v,\mathfrak v]\neq \{0\}$ and $[\mathfrak v,\mathfrak z]=\{0\}$. The exponential map $\exp: G\rightarrow \mathfrak g$, which is a diffeomorphism, allows to identify $G$ and $\mathfrak g$. We also assume that $\mathfrak g$ carries a scalar product $\langle \cdot,\cdot\rangle$ for which $\mathfrak v$ and $\mathfrak z$ are mutually orthogonal. There exists an orthonormal basis of left-invariant vector fields $X_1,\ldots,X_N$ of $\mathfrak v$ for $\langle \cdot,\cdot \rangle_{|\mathfrak v}$. The associated sub-Laplacian is $\Delta_{\mathfrak g}=\sum_{i=1}^N X_i^2$, which can also be defined on any compact left-quotient $H$ of $G$. Then, taking the family $(Z_j)_{1\leq j\leq m}$ as a basis of $\mathfrak z$, we see that Assumption \ref{a:a} is satisfied. This setting encompasses the case of sub-Laplacians defined on quotients of the $(2d+1)$-dimensional Heisenberg group, or more generally for H-type (Heisenberg-type) sub-Laplacians (see  \cite{Kaplan80}, \cite{fermanian2020quantum} and Appendix \ref{s:contact}).
\item For Baouendi--Grushin-type sub-Laplacians: e.g., for $\partial_x^2+\sin(x)^2\partial_y^2$ in $(\R/2\pi\Z)^2$, the set of points $x$ such that $\mathcal{D}_x\neq T_xM$ consists of the singular lines $\{x=0\}$ and $\{x=\pi\}$, and we can take $Z_1=\partial_y$. Note that the usual Baouendi--Grushin operator is $\partial_x^2+x^2\partial_y^2$, but we put here a sine in order to define it on a compact manifold without boundary.
\item For the horizontal Laplacian associated to a connection on a principal bundle over a Riemannian manifold. The $Z_i$ are then generated by the Lie algebra of the structure group and sweep out the fiber (see \cite[Section 11.2 and Appendix B]{montgomery2002tour}). Joint eigenfunctions of horizontal Laplacians on principal bundles have already been studied in several papers, see for instance \cite{guillemin90} and \cite{taylor1992semiclassical}.
\item The above examples are ``step 2,'' but it is also possible to build ad hoc sub-Laplacians satisfying Assumption \ref{a:a} and requiring higher-order brackets of the $X_i$ to generate the whole tangent bundle (see Appendix \ref{a:Martinet}).
\item For manifolds obtained as products of the previous examples (and associated sub-Laplacians obtained by sum), since Assumption \ref{a:a} is stable under product.
\end{itemize}

Assumption \ref{a:a} may be regarded as a quantum integrability assumption (see \cite{TZ03}). There is an $\R^m$ action generated by the Poisson commuting Hamiltonians $h_{Z_j}$ on $T^*M$ induced by the $Z_j$, with momentum map $(\sqrt{g^*}, |h_{Z_1}|,\ldots,|h_{Z_m}|):T^*M\rightarrow \R^{m+1}$. Therefore, the present work has to be compared with \cite{zelditch1990quantum} (notably Section 3), where the quantum limits for on- and off-diagonal matrix elements are computed in the  case where there is a torus action. The main difference is that the present work deals with a non-compact abelian action.

\subsubsection{The cotangent bundle \texorpdfstring{$T^*M$}{T*M} under Assumption \ref{a:a}} \label{s:cotangentbdl}
Our goal in this section is to describe the cotangent bundle $T^*M$ when Assumption \ref{a:a} is satisfied. Along the way, we introduce a bunch of useful notations.

In this paper, we denote by $\omega$ the canonical symplectic form on the cotangent bundle $T^*M$ of $M$. In local coordinates $(q,p)$ of $T^*M$, we have $\omega=\sum_j dp_j\wedge dq_j$ where $q=(q_1,\ldots,q_N)$ and $p=(p_1,\ldots,p_N)$. Given a smooth Hamiltonian function $h:T^*M\rightarrow \R$, we denote by $\vec h$ the corresponding Hamiltonian vector field on $T^*M$, defined by $\iota_{\vec h}\omega=-dh$. Given any smooth vector field $V$ on $M$, we denote by $h_V$ the Hamiltonian function (momentum map) on $T^*M$ associated with $V$, defined in local coordinates by $h_V(q,p)=p(V(q))$. The Hamiltonian flow $\exp(t\vec{h}_V)$ of $h_V$ projects onto the integral curves of $V$.

In the entirety of the sequel, we consider a sub-Laplacian $\Delta_{\sR}$ satisfying Assumption~\ref{a:a}. Let $\mathcal{P}$ be the set of all subsets of $\{1,\ldots,m\}$. We write $\Sigma$ as a disjoint union
\begin{equation} \label{e:SigmaJ}
\Sigma =  	\bigsqcup_{\mathcal{J}\in\mathcal{P}} \Sigma_{\mathcal{J}}
\end{equation}
where, for $\mathcal{J}\in\mathcal{P}$, $\Sigma_{\mathcal{J}}$ is defined as the set of points $(q,p)\in\Sigma$ for which
\begin{equation} \label{e:defSigmaJ}
\big\{ j\in\{1,\ldots,m\}, h_{Z_j}(q,p)\neq 0\big\}=\mathcal{J}.
\end{equation}
Note that \eqref{e:SigmaJ} is a disjoint union. Also, let us justify that the sets $\Sigma_\J$ are non-empty. We denote by $\pi:T^*M\rightarrow M$ the canonical projection. We notice that $\pi(\Sigma)=\{x\in M, \; \mathcal{D}_x\neq T_xM\}\neq \varnothing$. We pick $q\in \pi(\Sigma)$, then the non-vanishing vector fields $Z_j$ are independent at $q$ (point (i) in Assumption \ref{a:a}). Since $h_{Z_j}(q,p)=p(Z_j(q))$, we conclude that for any $\mathcal{J}\in\mathcal{P}$ there exists $p$ such that $(q,p)\in\Sigma_{\mathcal{J}}$.

\subsubsection{Quantum Limits under Assumption \ref{a:a}}

Our first preliminary result states that it is possible to split any QL into several pieces that come from well-characterized parts of the associated sequence of eigenfunctions. In practice, it will be possible to study each piece separately, and then to glue the results together. In order to give a precise statement, we need to define joint microlocal defect measures.
\begin{definition} \label{d:mdm}
Let $(u_k)_{k\in\N^*},(v_k)_{k\in\N^*}$ be bounded sequences in $L^2(M)$ such that $u_k$ and $v_k$ weakly converge to $0$ as $k\rightarrow +\infty$. A joint microlocal defect measure of $(u_k)_{k\in\N^*}$ and $(v_k)_{k\in\N^*}$ is any Radon measure $\nu_{\mathrm{joint}}$ on $S^*M$ for which there exists an extraction $\sigma:\N^*\rightarrow\N^*$ such that for any $0$-th order pseudodifferential operators $A$ with principal symbol $a=\sigma_P(A)$, there holds
\begin{equation*}
(Au_{\sigma(k)},v_{\sigma(k)})\underset{k\rightarrow +\infty}{\longrightarrow} \int_{S^*M}ad\nu_{\mathrm{joint}}.
\end{equation*}
\end{definition}
In case $u_k=v_k$ for any $k\in\N^*$, we recover the microlocal defect measures of Definition \ref{d:defmdm}. Note that joint microlocal defect measures are signed measures, and that joint QLs (defined as joint microlocal defect measures of two sequences of normalized eigenfunctions) are not necessarily invariant under the geodesic flow, even in the Riemannian case. 

The following proposition will be instrumental to the proof of our main results.
\begin{proposition} \label{t:splittinggeneral}
Let $\Delta_{\sR}$ satisfy Assumption \ref{a:a}. We assume that $(\varphi_k)_{k\in\N^*}$ is a normalized sequence of eigenfunctions of $-\Delta_{\sR}$ with associated eigenvalues $\lambda_k\rightarrow +\infty$. Then, up to extraction of a subsequence, one can decompose 
\begin{equation} \label{e:decompovarphik}
\varphi_k=\varphi_k^{\varnothing}+\sum_{\mathcal{J}\in\mathcal{P}\setminus \{\varnothing\}}\varphi_k^{\mathcal{J}},
\end{equation}
 with the following properties:
\begin{itemize}
\item The sequence $(\varphi_k)_{k\in\N^*}$ has a unique QL $\nu$;
\item For any $\J\in\myP$ and any $k\in\N^*$, $\varphi_k^{\mathcal{J}}$ is an eigenfunction of $-\Delta_{\sR}$ with eigenvalue $\lambda_k$;
\item Using the identification \eqref{e:identification}, the sequence $(\varphi_k^{\varnothing})_{k\in\N^*}$ admits a unique microlocal defect measure $\beta\nu^{\varnothing}$, where $\beta\in [0,1]$, $\nu^{\varnothing}\in\mathscr{P}(S^*M)$ and $\nu^{\varnothing}(S\Sigma)=0$;
\item For any $\mathcal{J}\in\mathcal{P}\setminus \{\varnothing\}$, the sequence $(\varphi_k^{\mathcal{J}})_{k\in\N^*}$ also admits a unique microlocal defect measure $\nu^{\mathcal{J}}$, having all its mass contained in $S\Sigma_\J$;
\item For any $\J\neq\J'\in\myP$, the joint microlocal defect measure of the sequences $(\varphi_k^\J)_{k\in\N^*}$ and $(\varphi_k^{\J'})_{k\in\N^*}$ vanishes. As a consequence,
\begin{equation} \label{e:eqdefectmeasure}
\nu=\beta\nu^{\varnothing}+\sum_{\mathcal{J}\in\mathcal{P}\setminus \{\varnothing\}}\nu^{\mathcal{J}}
\end{equation}
and the sum $(1-\beta)\nu_\infty:=\underset{\mathcal{J}\in\mathcal{P}\setminus \{\varnothing\}}{\sum}\nu^{\mathcal{J}}$ is supported in $S\Sigma$.
\end{itemize}
\end{proposition}
In \eqref{e:eqdefectmeasure}, we separated the empty set from the other subsets $\J\in\myP\setminus \{\varnothing\}$ to emphasize on the concentration of $\beta\nu^\varnothing$ on $U^*M$, while the rest of the measure $\nu$ in \eqref{e:eqdefectmeasure} is supported in $S\Sigma$. This is purely artificial, since one could have included $\beta\nu^\varnothing$ into the sum over $\J$. Besides, the notation $\nu^\varnothing$ used above corresponds to the notation $\nu_0$ in \cite{de2018spectral} (see Proposition~\ref{p:analogtheoremB} above): we changed it to get a unified notation for the different parts of the QL, namely $\nu^\varnothing$ and $\nu^\J$.

Proposition \ref{t:splittinggeneral} is proved with joint spectral calculus (see \cite[VII and VIII.5]{ReeSim72}) for the operators $Z_1^*Z_1,\ldots, Z_m^*Z_m$ and $-\Delta_{\sR}$ which is made possible thanks to Assumption \ref{a:a}. The ideas underlying Proposition \ref{t:splittinggeneral} are close to those of \cite[Theorem 0.6]{colin79} and the proof is inspired by the seminal paper \cite{gerard1991microlocal}.

\subsection{Main results on products of the three-dimensional Heisenberg group} \label{s:products}

\subsubsection{Products of the three-dimensional Heisenberg group} \label{s:productsoftheheisenberggroup}
Our main results give further information on QLs, but are restricted to a specific family of sub-Laplacians, which in particular satisfy Assumption \ref{a:a}. In order to define these operators, let us first recall the definition of the three-dimensional Heisenberg group. If we endow $\R^{3}$ with the product law
\begin{equation} \label{e:grouplaw}
(x,y,z) \star (x',y',z') =(x+x',y+y',z+z'-xy'),
\end{equation}
then, with this law, $\widetilde{\mathbf{H}}=(\R^{3},\star)$ is a Lie group, which is isomorphic to the group of matrices
\begin{equation*}
\left\{\begin{pmatrix} 1&x&-z \\ 0&1&y \\ 0&0&1\end{pmatrix}, \ x,y,z \in \R \right\}
\end{equation*}
endowed with the standard product law on matrices.

We consider the left quotient $\mathbf{H}=\Gamma\backslash\widetilde{\mathbf{H}}$ where $\Gamma=(\sqrt{2\pi}\mathbb{Z})^{2}\times 2\pi\mathbb{Z}$ is a cocompact subgroup of $\widetilde{\mathbf{H}}$ (so that $\mathbf{H}$ is compact). Note that $\mathbf{H}$ is not homeomorphic to an abelian torus since its fundamental group is $(\Gamma,\star)$, which is non-commutative. The vector fields on $\mathbf{H}$
\begin{equation*}
X=\partial_{x} \quad \text{and}  \quad Y=\partial_{y}-x\partial_z
\end{equation*}
are left invariant, and we consider 
\[ \Delta_{\mathbf{H}}=X^2+Y^2 \] 
the associated sub-Laplacian; here $\mu$ is the Lebesgue measure $\mu=dxdydz$ and $(X,Y)$ is orthonormal for $g$. 

Then, we consider the product manifold $\mathbf{H}^{m}$ and the associated sub-Laplacian $\Delta$ for some integer $m\geq 2$, that is
\begin{equation} \label{e:tracDelta}
\Delta=\Delta_{\mathbf{H}}\otimes (\text{Id})^{\otimes m-1}+\Id\otimes \Delta_{\mathbf{H}} \otimes (\Id)^{m-2}+\cdots+ (\text{Id})^{\otimes m-1}\otimes \Delta_{\mathbf{H}},
\end{equation}  
which is a second-order pseudodifferential operator. Below, we give an expression \eqref{e:Deltafields} for $\Delta$ which is more tractable. All the eigenvalues of $-\Delta$ are integers and we will describe them more precisely in Section \ref{s:spectralspectral}.
In the sequel, we fix once for all an integer $m\geq 2$. 

\begin{remark} \label{r:tensoreigen}
If $(\varphi_k)_{k\in\N^*}$ denotes an orthonormal basis of $L^2(\mathbf{H})$ consisting of eigenfunctions of $-\Delta_{\mathbf{H}}$, then 
\begin{equation*}
\{ \varphi_{k_1}\otimes \cdots\otimes \varphi_{k_m} \mid k_1, \ldots, k_m \in \N^*\}
\end{equation*}
 is an orthonormal basis of $L^2(\M)$ consisting of eigenfunctions of $-\Delta$. However, there exist orthonormal bases of $L^2(\M)$ which cannot be put in this tensorized form.
\end{remark}
In this introductory section, the sub-Laplacian we consider is either $\Delta_{\mathbf{H}}$, or $\Delta$, or an arbitrary sub-Laplacian $\Delta_{\sR}$ on a general sub-Riemannian manifold $(M,\mathcal{D},g)$. In all cases, we keep the same notations $g^*$, $\Sigma$ and $S\Sigma$ to denote the objects introduced in Section \ref{s:cotangentbdl}, without any reference in the notation to the underlying manifold even for the particular sub-Laplacians $\Delta_{\mathbf{H}}$ and $\Delta$. It should not lead to any confusion since the context is precisely stated when necessary.

In order to give a precise statement of our main results, it is necessary to introduce a decomposition of the sub-Laplacian $\Delta$ defined by \eqref{e:tracDelta}. Taking coordinates $(x_j,y_j,z_j)$ on the $j$-th copy of $\mathbf{H}$, we can write 
\begin{equation} \label{e:Deltafields}
\Delta=\sum_{j=1}^m \ (X_j^2+Y_j^2)
\end{equation}
with $X_j=\partial_{x_j}$ and $Y_j=\partial_{y_j}-x_j\partial_{z_j}$. We note that $\Delta$ satisfies Assumption \ref{a:a} (for $Z_j=\partial_{z_j}$ for $j=1,\ldots,m$).

For $1\leq j\leq m$, we consider the operator $R_j=\sqrt{\partial_{z_j}^*\partial_{z_j}}$ and we make an $L^2(\M)$ Fourier expansion with respect to the $z_j$-variable in the $j$-th copy of $\mathbf{H}$. On the eigenspaces corresponding to non-zero modes of this Fourier decomposition, we define the operator $\Omega_j=-R_j^{-1}\Delta_j=-\Delta_jR_j^{-1}$ where $\Delta_j=X_j^2+Y_j^2$. Thus, $-\Delta$ acts as
\begin{equation} \label{e:decompolaplacienintro}
-\Delta=\sum_{j=1}^m R_j\Omega_j
\end{equation}
on any eigenspace of $-\Delta$ on which $R_j\neq 0$ for any $1\leq j\leq m$. Moreover, $R_j$ and $\Omega_j$ are pseudodifferential operators of order $1$ in any cone of $T^*\M$ whose intersection with some conic neighborhood of the set $\{p_{z_j}=0\}$ is reduced to $0$ (but not near $\{p_{z_j}=0\}$ since the principal symbol $|p_{z_j}|$ of $R_j$ is not differentiable there).

The operator $\Omega_j$, seen as an operator on the $j$-th copy of $\mathbf{H}$, is an harmonic oscillator, having in particular eigenvalues $2n+1$, $n\in\mathbb{N}$ (see \cite[Section 3.1]{de2018spectral}). Moreover, the operators $\Omega_i$ (considered this time as operators on $\M$) commute with each other and with the operators $R_j$. 

\subsubsection{Flows and probabilities on \texorpdfstring{$\Sigma$}{Σ}} \label{s:flowsandprobabilities}
Let us briefly describe $\Sigma$ for the sub-Laplacian $\Delta$. Denoting by $(q,p)$ the canonical coordinates in $T^*\M$ as
\[ q=(x_1,y_1,z_1,\ldots, x_m,y_m,z_m) \] and \[ p=(p_{x_1},p_{y_1},p_{z_1},\ldots,p_{x_m},p_{y_m},p_{z_m}), \] we obtain that
\begin{equation*}
\Sigma=\big\{(q,p)\in T^*\M \ \mid \ p_{x_j}=p_{y_j}-x_jp_{z_j}=0 \text{ for any } 1\leq j\leq m\big\}.
\end{equation*}
The map \begin{align*}\Sigma&\rightarrow \M\times\R^m\\ (q,p)&\mapsto (q,p_{z_1},\ldots,p_{z_m})\end{align*} is one-to-one.  Above any point $q\in\M$, the fiber of $\Sigma$ is of dimension $m$, and therefore, above any point $q\in\M$, $S\Sigma$ consists of an $(m-1)$-dimensional sphere. At some point in Section \ref{s:converse}, we will consider the coordinates $(q,p_{z_1},\ldots,p_{z_m})$ on $\Sigma$ and the coordinates $(q,[p_{z_1}:\cdots:p_{z_m}])$ on $S\Sigma$, where the notation $[p_{z_1}:\cdots:p_{z_m}]$ stands for homogeneous coordinates.

Writing $\Sigma$ as a disjoint union \eqref{e:SigmaJ}, we notice that $\Sigma_{\mathcal{J}}$ is the set of points $(q,p)\in\Sigma$ with $p=(p_{x_1},p_{y_1},p_{z_1},\ldots,p_{x_m},p_{y_m},p_{z_m})$ such that 
\[ \left\{j\in\{1,\ldots,m\}, p_{z_j}\neq 0\right\}=\mathcal{J}. \]
The notation $S\Sigma_\J$ designates in the sequel the set of points $(q,p)$ of $S\Sigma$ which have null (homogeneous) coordinate $p_{z_i}$ for any $i\notin \J$ and non-null $p_{z_j}$ for $j\in\J$. Note that this set is, in general, neither open nor closed.

For $\J\in\mathcal{P}\setminus \{\varnothing\}$, we consider the simplex
\begin{equation*}
\mathbf{S}_\J= \bigg\{s=(s_j)\in \R_+^\J, \ \sum_{j\in\J}s_j=1\bigg\}
\end{equation*}
and, for $s=(s_j)\in \mathbf{S}_\J$ and $(q,p)\in \Sigma_\J$, we set
\begin{equation*}
\rho_s^\J(q,p)=\sum_{j\in\J}s_j|p_{z_j}|.
\end{equation*}
We denote by  $\sigma_P$ the principal symbol (see Appendix \ref{a:pseudo}). We have
\begin{equation} \label{e:defRs}
\rho^\J_s(q,p)=(\sigma_P(R_s))_{|\Sigma_\J} \ \text{   where   } \ \ R_s =\sum_{j\in\J}s_jR_j,
\end{equation}
noting that $R_s$ is a pseudodifferential operator in $\Sigma_\J$.
 Moreover, the vector field
\begin{equation}\label{e:vecrhoJs}
\vec\rho^\J_s=\sum_{j\in\J}{\text{sgn}}(p_{z_j})s_j\partial_{z_j}.
\end{equation}
 is well-defined on $\Sigma_{\mathcal{J}}$ and smooth.\footnote{Roughly speaking, $\vec{\rho}_s^\J$ is some kind of Hamiltonian vector field associated to $\rho_s^\J$, but note that $\Sigma_\J$ is not necessarily a symplectic manifold since it may be odd-dimensional, hence the term ``Hamiltonian'' is not meaningful here.}
\begin{remark} \label{r:flowrho}
All coordinates $x_i,y_i$ (for $1\leq i\leq m$) and $z_i$ (for $i\notin\J$) are preserved under the flow of $\vec\rho^\J_s$. Once fixed these coordinates, any trajectory of the flow of $\vec\rho^\J_s$ is conjugated to a geodesic trajectory in the flat $|\J|$-dimensional Euclidean torus. This trajectory depends only on $s$ and the signs of the $p_{z_j}$, which are preserved by the flow.
\end{remark}

Finally, we introduce a set of probability measures on $S^*\M$ having specific invariance properties: 
\begin{equation}
\begin{split}
\mathscr{P}_\infty&=\bigg\{\nu_\infty \in \mathscr{P}(S^*\M) \text{ which can be written as } \nu_\infty  =\sum_{\mathcal{J}\in\mathcal{P}\setminus \{\varnothing\}}\int_{\mathbf{S}_\J}\nu^\J_sdQ^\J(s), \\
&\qquad \text{where for any } \J\in \myP, \ Q^\J \text{ is a non-negative Radon measure on } \mathbf{S}_\J,\smallskip\\
&\qquad \text{and } \forall \J\in \myP, \forall s\in \mathbf{S}_\J, \ \nu_s^\J \in \mathscr{P}(S^*\M), \  \nu_s^\J(S^*\M\setminus S\Sigma_\J)=0,  \\
&\qquad \text{and for $Q^\J$-almost any $s\in\mathbf{S}_\J$ }, \nu^\J_s \text{ is invariant under } \vec\rho^\J_s\bigg\} .
\end{split}
\label{e:Pssigma}
\end{equation}
This means that for any continuous function $a:S\Sigma\rightarrow \R$, there holds 
\begin{equation*}
\int_{S\Sigma} ad\nu_\infty=\sum_{\J\in\mathcal{P}\setminus \{\varnothing\}}\int_{\mathbf{S}_\J} \bigg(\int_{S\Sigma_\J}ad\nu^\J_s\bigg) dQ^\J(s).
\end{equation*}
Any measure $\nu_\infty\in\mathscr{P}_\infty$ is supported in $S\Sigma$, and its invariance properties are given separately on each set $S\Sigma_\J$ (for $\J\in\mathcal{P}\setminus \{\varnothing\}$). Its restriction to any of these sets, denoted in the sequel by 
\[\nu^\J=\int_{\mathbf{S}_\J}\nu^\J_sdQ^\J(s), \] 
can be disintegrated with respect to $\mathbf{S}_\J$, and for $Q^\J$-almost any $s\in\mathbf{S}_\J$, there is a corresponding measure $\nu_s^\J$ which is invariant under the flow $e^{t\vec\rho^\J_s}$.

\subsubsection{Main results} \label{s:mainresultssection}

Our first main result is the following.
\begin{theorem} \label{t:mainQL} \label{t:qlqe} Let $(\varphi_k)_{k\in\mathbb{N}^*}$ be an $L^2(\M)$-normalized sequence of eigenfunctions of $-\Delta$ associated with the eigenvalues $\lambda_k\rightarrow +\infty$. Let $\nu$ be a QL associated to the sequence $(\varphi_k)_{k\in\mathbb{N}^*}$. Then the measure $\nu_\infty$ defined in \eqref{e:nu0nuinfty} satisfies $\nu_\infty\in \mathscr{P}_\infty$ where $ \mathscr{P}_\infty$ has been introduced in \eqref{e:Pssigma}.
\end{theorem}

Note that Theorem \ref{t:mainQL} holds for \emph{any} $L^2(\M)$-normalized sequence of eigenfunctions of $-\Delta$, and not only for the bases described in Remark \ref{r:tensoreigen}. 

We were not able to prove that all elements of $\mathscr{P}_\infty$ can be realized as a QL, which would be some kind of converse of Theorem \ref{t:mainQL}. We do not know whether it is true. However, we were able to prove two results in this direction.

The first one realizes a family of probability measures strictly included in $\mathscr{P}_\infty$ as QLs. Any element in this family is obtained as the tensorial product of two measures: $\J$ being fixed, the first measure is a kind of Lebesgue measure in the copies of $\mathbf{H}$ corresponding to $i\notin\J$, and the second measure is a measure ``invariant in the $z_j$ variable'' in the copies of $\mathbf{H}$ corresponding to $j\in\J$.

To make it rigorous, we denote by $\mathbf{H}^\J$ (resp. $\mathbf{H}^{\notin\J}$) the product of copies of $\mathbf{H}$ with variables $x_j,y_j,z_j$, for $j\in\J$ (resp. $j\notin \J$). We define $\mathscr{M}^\J$ as the set of Radon probability measures on $S^*\mathbf{H}^\J$ which are invariant under $\partial_{z_j}$ for $j\in\J$. Then, we define the probability measure $\ell^{\notin \J}$ on  $T^*\mathbf{H}^{\notin \J}$ as the tensorial product of the Haar measure on $\mathbf{H}^{\notin \J}$ and the Dirac mass on the zero section in the fibers. And finally we set
\begin{equation}\label{e:defDJ}
\mathscr{D}^\J=\{m^\J\otimes\ell^{\notin \J}  \mid  m^\J\in\mathscr{M}^\J\}
\end{equation}
which is viewed as a set of Radon probability measures on $S^*\mathbf{H}^m$.

Our second main result is the following.
\begin{theorem}\label{t:conversereplacement}
For any $\J\in\myP\setminus\{\varnothing\}$, let $\nu^\J\in\D^\J$, and let $c_\J\geq 0$ so that 
\[ \sum_{\J\in\myP\setminus\{\varnothing\}}c_\J=1. \] 
Then
\[ \nu=\sum_{\J\in\myP\setminus\{\varnothing\}}c_\J\nu^\J \]
is a QL.
\end{theorem}
Theorem \ref{t:conversereplacement} has the drawback that any measure $\nu$ as in the statement is invariant under all vector fields $\partial_{z_j}$ at the same time, and thus Theorem \ref{t:conversereplacement} does not prove the existence of QLs which are invariant under a single flow $\vec{\rho}_s^\J$. Our last result shows that such QLs indeed exist:
\begin{theorem}\label{t:goodconversereplacement}
If $m\geq 2$, there exists a QL $\nu$ such that the equation $\vec{\rho}_s^\J\nu=0$ is satisfied only for a unique $\J\in\myP\setminus\{\varnothing\}$ and a unique $s\in\mathbf{S}_\J$.
\end{theorem}
This last result shows that all vector fields $\vec{\rho}_s^\J$ play a role at the quantum level.

\subsubsection{Comments on the main results} \label{s:comments}

\paragraph{Spectrum of $-\Delta$.} The particularly rich structure of the QLs of the sub-Laplacian $-\Delta$ described in Theorem \ref{t:mainQL} is due to the high degeneracy of its spectrum. To make an analogy with the Riemannian case, the QLs of the usual flat Riemannian torus $\mathbb{T}^2=\R^2/\Z^2$ have a rich structure (see \cite{jakobson1997quantum}), whereas the eigenfunctions and the QLs of irrational Riemannian tori are simply obtained as tensor products.

Recall that the spectrum $\text{spec}(-\Delta_{\mathbf{H}})$ is given by 
\begin{align*}
&\mathrm{spec}(-\Delta_{\mathbf{H}})= \\
&\quad \{\lambda_{\ell,\alpha}=(2\ell +1)|\alpha| \mid \ell \in \N, \ \alpha\in \N^*\} \cup \{\mu_{k_1,k_2}=2\pi(k_1^2+k_2^2) \mid (k_1,k_2)\in\Z^2\} 
\end{align*}
where $\lambda_{\ell,\alpha}$ is of multiplicity $|\alpha|$, multiplied by the number of decompositions of $\lambda_{\ell,\alpha}$ into the form $(2\ell'+1)|\alpha'|$ (see \cite[Corollary 3.3]{Fol04}, \cite[Proposition 3.1]{de2018spectral}). Therefore, using a tensorial orthonormal basis of $L^2(\M)$ consisting of eigenfunctions of $-\Delta$, we get that
\begin{align*}
&\mathrm{spec}(-\Delta)= \\
&\quad\bigg\{\sum_{j=1}^{J} \left(2n_j+1\right)|\alpha_j|+2\pi\sum_{i=1}^{2(m-J)}k_i^2\text{  with } 0\leq J\leq m, \ k_i\in \Z, \ n_j\in\N, \ \alpha_j\in \N^* \bigg\} 
\end{align*}
(see Section \ref{s:converse} for a detailed proof) and the multiplicities in $\text{spec}(-\Delta)$ can be deduced from those in $\text{spec}(-\Delta_{\mathbf{H}})$. For a description of the eigenfunctions of $\Delta_{\mathbf{H}}$, see \cite[Section 3]{Fol04}; the eigenfunctions of $\Delta$ are sums of tensor products of these eigenfunctions. Note that the eigenvalues for which $J=m$ form a density-one subsequence of all eigenvalues labeled in increasing order.

The specific algebraic structure of $\text{spec}(-\Delta)$ will be exploited in particular to prove Theorems \ref{t:conversereplacement} and \ref{t:goodconversereplacement}. 

\begin{remark}
Contrarily to those of flat tori (see \cite{jakobson1997quantum}), the QLs of $\M$ (or, more precisely, their pushforward under the canonical projection onto $\M$) are not necessarily absolutely continuous. This fact has already been noticed in the case $m=1$ in \cite[Proposition 3.2(2)]{de2018spectral} -- in this case the Dirac measure on a Reeb orbit is a (projected) QL. This can be understood as follows: on flat tori the microlocal defect measures of joint eigenfunctions are Lebesgue measures on phase space tori, which project without singularities to the base. But for $\M$, since there exist Hermite eigenfunctions which concentrate on closed orbits, the associated QLs have singular projections.
\end{remark}

\begin{remark}
There is no clear link of our result with the concept of ``second microlocalization,'' although such a link may seem possible at first sight. Focusing on a QL supported in $S\Sigma$, our study builds upon a \emph{spectral} decomposition of it, and not upon a second direction of microlocalization as is usually done while studying fine properties of sequences of solutions of an operator (see for example \cite{fermanian2000mesures}).
\end{remark}

\subsection{Related problems and bibliographical comments.}  \label{s:bibliocomments}

\paragraph{Quantum Limits of Riemannian Laplacians.} The study of QLs for Riemannian Laplacians is a long-standing question. Over the years, a particular attention has been drawn towards Riemannian manifolds whose geodesic flow is ergodic since in this case, up to extraction of a density-one subsequence, the set of QLs is reduced to the Liouville measure, a phenomenon which is called Quantum Ergodicity (see for example \cite{shnirel1974ergodic}, \cite{de1985ergodicite}, \cite{zelditch1987uniform}). For compact arithmetic surfaces, a detailed study of invariant measures lead to the resolution of the Quantum Unique Ergodicity conjecture for these manifolds, meaning that the extraction of a density-one subsequence in the previous result is even not necessary for these particular manifolds (\cite{lindenstrauss2006invariant}). In manifolds which have a degenerate spectrum, the set of QLs is generally richer: see for example  \cite{jakobson1997quantum} for the description of QLs on flat tori or \cite{anantharaman2016wigner} for the case of the disk. Also, the QLs of the sphere $\mathbb{S}^d$ equipped with its canonical metric (see \cite{jakobson1996classical}) have been fully characterized. However, to the author's knowledge, few papers until now have been devoted to the study of QLs of product of Riemannian manifolds (see \cite{anantharaman2015semiclassical}, \cite[Corollary 2]{humbert2020observability}, \cite{arnaiz2022} for recent results).

\paragraph{Quantum Limits of sub-Laplacians.} The understanding of QLs of general sub-Laplacians remains a largely unexplored question. Their study was undertaken in the work \cite{de2018spectral}, which was mainly devoted to the three-dimensional contact case -- encompassing for example the case of the manifold $\mathbf{H}$ -- although some results are valid for any sub-Laplacian (see Proposition \ref{p:analogtheoremB} of the present paper). The authors proved Weyl laws (i.e., results ``in average'' on eigenfunctions), a result of decomposition of QLs, and also Quantum Ergodicity properties (i.e., equidistribution of QLs under an ergodicity assumption) for three-dimensional contact sub-Laplacians. The QLs of H-type (or Heisenberg-type) sub-Laplacians were also implicitly studied in \cite{fermanian2020quantum} thanks to a detailed study of the Schr\"odinger flow: the authors developed a notion of semiclassical measures adapted to ``Heisenberg type'' sub-Laplacians thanks to non-commutative Fourier analysis and a subsequent adapted definition of pseudodifferential operators. Taking in Theorem 2.10(ii)(2) of \cite{fermanian2020quantum} eigenfunctions of the sub-Laplacian as initial data of the Schr\"odinger equation yields a decomposition of QLs which may be regarded as an analog of Theorem \ref{t:mainQL} in the context of H-type groups (more precisely, one should use the adaptation to the compact (quotient) setting of these results which was done in \cite{fermanian2020observability}, among other things); however, the result of \cite{fermanian2020quantum} is proved by totally different techniques, and in particular the splitting of QLs which we obtain through joint spectral calculus (see below) is replaced in \cite{fermanian2020quantum} by non-commutative harmonic analysis.

\paragraph{Non-commutative harmonic analysis.} As already mentioned in Remark \ref{r:euclstrat}, it is possible to use the stratified Lie algebra structure to study the spectral theory of (nilpotent) sub-Laplacians, as done for example in \cite{fermanian2020quantum}. This work builds upon non-commutative harmonic analysis (see \cite{taylor1986}) to develop a pseudodifferential calculus and semiclassical tools ``naturally attached to the sub-Laplacian''. It is likely that one could have given a proof of Theorems \ref{t:mainQL}, \ref{t:conversereplacement} and \ref{t:goodconversereplacement} based on similar tools as in \cite{fermanian2020quantum}. The point of view we adopt in the present paper is different: it only requires ``classical'' pseudodifferential calculus  (briefly recalled in Appendix \ref{a:pseudo}) since there is still enough commutativity and ellipticity from the choice of operators under study. Beside making the results more accessible to some readers, it allows us to isolate in each eigenfunction the piece which is responsible, in the high-frequency limit, for a given part of the QL. Moreover, our method only builds upon abstract commutation arguments, at least for Proposition \ref{t:splittinggeneral}, and in particular it avoids the computation of irreducible representations which are always specific to certain families of groups (e.g., H-type groups in \cite{fermanian2020quantum}). 

Part of our results can be reinterpreted through the light of noncommutative harmonic analysis. For example, the part of the QL in $U^*M$, namely $\beta\nu^\varnothing$ (see \eqref{e:eqdefectmeasure}), is described in \cite{fermanian2020quantum} as the part of the semiclassical measure supported above the finite dimensional representations $\pi^{0,\omega}_x$ (see \cite[Section 2.2.1]{fermanian2020quantum}), and the fact that $\beta\nu^\varnothing=0$ for ``almost all'' QLs (see Proposition \ref{p:analogtheoremB}) can be recovered from the fact that the Plancherel measure denoted by $|\lambda|^dd\lambda$ in \cite{fermanian2020quantum} gives no mass to finite-dimensional representations. 

Also, in the setting covered by Theorems \ref{t:mainQL}, \ref{t:conversereplacement} and \ref{t:goodconversereplacement}, i.e., products of quotients of the Heisenberg group, the joint spectrum of $(\Delta_1,\ldots,\Delta_m,i^{-1}\partial_{z_1},\ldots,i^{-1}\partial_{z_m})$, which can be drawn in $\R^{2m}$, is called ``Heisenberg fan''. This terminology was introduced in \cite{strichartz91} for the three-dimensional Heisenberg sub-Laplacian; in our case, this fan consists in a discrete set of points which can be gathered into lines (see \cite[Figure 1]{strichartz91}). In case $m=1$, the subset of points (or joint eigenvalues) corresponding to $\varphi_k^\varnothing$ and $\nu^\varnothing$ in the statement of Theorem \ref{t:mainQL} can be seen as points close to the vertical line $\{0\}\times\R\subset \R^2$. Similar descriptions can be given in case $m\geq 2$. Also, let us mention that we could derive from the proof of Proposition \ref{t:splittinggeneral} a generalization of the definition of the Heisenberg fan to any sub-Laplacian satisfying Assumption \ref{a:a}, as the joint spectrum of $(-\Delta_{\sR},\sqrt{Z_1^*Z_1},\ldots,\sqrt{Z_m^*Z_m})$.

Let us also mention that sub-Laplacians on products of Heisenberg groups (and, more generally, on ``decomposable groups'') were analysed in \cite{bahouri2016} with a non-commutative harmonic analysis point of view in order to establish Strichartz estimates (see notably \cite[Section 1.4 and Corollary 1.6]{bahouri2016}).

\paragraph{Joint spectral calculus.} A key ingredient in the proof of all results of the present paper is the joint spectral calculus (see \cite[VII and VIII.5]{ReeSim72} and \cite{colin79}) associated to the operators $Z_1^*Z_1,\ldots,Z_m^*Z_m$ and $-\Delta_{\sR}$. This joint calculus, at least for Heisenberg groups, is well-known, see for example \cite[Section 2]{deninger84}, or \cite{thangavelu2009} for the quotient case. It was used for instance in \cite{MRS95} to prove a Marcinkiewicz multiplier theorem in H-type groups.

\paragraph{Structure of the paper.} In Section \ref{s:parab} we prove Proposition \ref{t:splittinggeneral} using joint spectral calculus. Section \ref{s:prelimmainQL} is devoted to preliminary steps in the proof of Theorem \ref{t:mainQL}. Building upon Proposition \ref{t:splittinggeneral}, we establish Theorem \ref{t:mainQL} in Section \ref{s:prooftheo}. In Section \ref{s:converse}, we prove Theorem \ref{t:conversereplacement} by constructing explicitly a sequence of eigenfunctions with prescribed QL. In Section \ref{s:goodconverse}, we prove Theorem \ref{t:goodconversereplacement}.

In Appendix \ref{a:pseudo}, we recall some basic facts of pseudodifferential calculus and a related elementary lemma. In Appendix \ref{a:supplementary}, we build an example of step $3$ sub-Laplacian satisfying Assumption \ref{a:a}. Finally, in Appendix \ref{s:contact}, we prove a result concerning QLs of flat contact manifolds in any dimension: for such manifolds, the invariance properties of QLs are essentially the same as in the three-dimensional case. Although this is a direct consequence of the results in \cite{fermanian2020quantum}, we decided to provide here a short and self-contained proof since this can be seen as a toy model for the averaging techniques used repeatedly in the proof of Theorem~\ref{t:mainQL}.

\section{Proof of Proposition \ref{t:splittinggeneral}}  \label{s:parab}

\subsection{Notation}
We fix a sub-Laplacian $\Delta_{\sR}$ satisfying Assumption \ref{a:a}, we fix $(\varphi_k)_{k\in\mathbb{N}^*}$ a sequence of eigenfunctions of $-\Delta_{\sR}$ associated with the eigenvalues $(\lambda_k)_{k\in\mathbb{N}^*}$ with $\lambda_k\rightarrow +\infty$ and $\lVert\varphi_k\rVert_{L^2}=1$, and, possibly after extraction of a subsequence, we assume that $(\varphi_k)_{k\in\mathbb{N}^*}$ has a unique QL~$\nu$.

Let us first give an intuition of how the proof goes. Set
\begin{equation} \label{e:defE}
E=\text{Id}-\Delta_{\sR}+\sum_{j=1}^mZ_j^*Z_j\in \Psi^2(M).
\end{equation}
We decompose $\varphi_k$ as a sum of functions which are joint eigenfunctions of $-\Delta_{\sR}$ and of all the $Z_j^*Z_j$ for $1\leq j\leq m$. Thus they are also eigenfunctions of $E$. Each of these functions is an eigenfunction of $-\Delta_{\sR}$ with same eigenvalue $\lambda_k$ as $\varphi_k$. Then, roughly speaking, we gather some of these functions into  $\varphi_k^\varnothing$ or into $\varphi_k^{\mathcal{J}}$ for some $\mathcal{J}\in\mathcal{P}\setminus \{\varnothing\}$, depending on their eigenvalues with respect to the operators $Z_j^*Z_j$ (for $1\leq j\leq m$) and $-\Delta_{\sR}$. 

Fix $\mathcal{J}\in\mathcal{P}\setminus \{\varnothing\}$. The functions we select (asymptotically as $k\rightarrow +\infty$) to be in $\varphi_k^{\mathcal{J}}$ are those such that the following spectral inequalities are satisfied:
\begin{enumerate}
\item $-\Delta_{\sR}\ll E$;
\item if $i\notin \J$, then $Z_i^*Z_i\ll E$;
\item if $j\in\J$, then $Z_j^*Z_j\gtrsim E$.
\end{enumerate}
Here, since we consider joint eigenfunctions of $-\Delta_{\sR}$, $E$ and $Z_j^*Z_j$ for any $1\leq j\leq m$, the above notation $A\ll B$ (resp. $A\gtrsim B$) means that as $k\rightarrow +\infty$ the ratio between the eigenvalue with respect to $A$ and the eigenvalue with respect to $B$ tends to $0$ (resp. is bounded below).

Before proceeding towards a rigorous proof, we introduce a few notations.
The principal symbol of $E$ is
\begin{equation*}
\sigma_P(E)=g^*+\sum_{j=1}^m\sigma_P(Z_j^*Z_j)
\end{equation*}
hence $E$ is elliptic, thanks to point (i) in Assumption \ref{a:a}.
For $n\in\N^*$, let $\chi_n\in C_c^\infty(\R,[0,1])$ such that $\chi_n(x)=1$ for $|x|\leq \frac{1}{2n}$ and $\chi_n(x)=0$ for $|x|\geq \frac1n$.  Thanks to functional calculus (see \cite[VII and VIII.5]{ReeSim72}), for $\mathcal{J}\in \mathcal{P}\setminus \{\varnothing\}$, the operator
\begin{equation} \label{e:defPn}
P_n^\mathcal{J}=\chi_n\Big(\frac{\text{Id}-\Delta_{\sR}}{E}\Big)\prod_{i\notin \mathcal{J}}\chi_n\Big(\frac{Z_i^*Z_i}{E}\Big)\prod_{j\in \mathcal{J}}(1-\chi_n)\Big(\frac{Z_j^*Z_j}{E}\Big)
\end{equation}
is well-defined. Note that, thanks to point (ii) in Assumption \ref{a:a}, we know that $E$ commutes with $Z_j^*Z_j$, for any $1\leq j\leq m$, and with $-\Delta_{\sR}$, which explains why we are allowed to use the quotients of operators in \eqref{e:defPn}. 
Similarly, we consider
\begin{equation} \label{e:defPnempty}
P_n^\mathcal{\varnothing}=(1-\chi_n)\bigg(\frac{\text{Id}-\Delta_{\sR}}{E}\bigg)+\chi_n\bigg(\frac{\text{Id}-\Delta_{\sR}}{E}\bigg)\prod_{i=1}^m \chi_n\bigg(\frac{Z_i^*Z_i}{E}\bigg).
\end{equation}
We note that for any $n\in\N$, 
\begin{equation}\label{e:sumisidentity}
\sum_{\J\in\myP} P_n^\J=\text{Id}.
\end{equation}

\subsection{A preliminary lemma}\label{s:proofpropofPnJ}
\begin{lemma}\label{l:propofPnJ}
For any $\mathcal{J}\in\mathcal{P}$, the following properties hold:
\begin{enumerate}[(1)]
\item $P_n^{\mathcal{J}}\in \Psi^0(M)$;
\item $[P_n^{\mathcal{J}},\Delta_{\sR}]=0$;
\item If $\J\neq\varnothing$, then $\sigma_P(P_n^{\mathcal{J}})\rightarrow \mathbf{1}_{\Sigma_{\mathcal{J}}}$ pointwise as $n\rightarrow +\infty$, where $\mathbf{1}_{\Sigma_{\mathcal{J}}}$ is the characteristic function of $\Sigma_{\mathcal{J}}$ (defined in \eqref{e:SigmaJ}). \\
If $\J=\varnothing$, then $\sigma_P(P_n^{\mathcal{J}})\rightarrow \mathbf{1}_{U^*M}$ pointwise as $n\rightarrow +\infty$,  where $\mathbf{1}_{U^*M}$ is the characteristic function of $U^*M$.
\end{enumerate}
\end{lemma}

\begin{proof}
Let us prove Point (1). Since $E\in \Psi^2(M)$ is elliptic, it is invertible, and thus
\begin{equation*}
(\text{Id}-\Delta_{\sR})E^{-1}=E^{-1}(\text{Id}-\Delta_{\sR})\in\Psi^0(M)
\end{equation*}
is self-adjoint. Hence, by \cite[Theorem 1(ii)]{hassell2000symbolic}, $(1-\chi_n)\Big(\frac{\text{Id}-\Delta_{\sR}}{E}\Big)\in\Psi^0(M)$ with principal symbol
\begin{equation*}
(1-\chi_n)\Big(\frac{g^*}{\sigma_P(E)}\Big).
\end{equation*}
Similarly, the operators $\chi_n\Big(\frac{\text{Id}-\Delta_{\sR}}{E}\Big)$, $\chi_n\Big(\frac{Z_i^*Z_i}{E}\Big)$ and $(1-\chi_n)\Big(\frac{Z_j^*Z_j}{E}\Big)$ (for any $1\leq i,j\leq m$) belong to $\Psi^0(M)$ with respective principal symbols
\begin{equation*}
\chi_n\Big(\frac{g^*}{\sigma_P(E)}\Big), \quad  \chi_n\Big(\frac{|h_{Z_i}|^2}{\sigma_P(E)}\Big) \quad \text{and} \quad (1-\chi_n)\Big(\frac{|h_{Z_j}|^2}{\sigma_P(E)}\Big).
\end{equation*}
Hence, $P_n^{\mathcal{J}}\in\Psi^0(M)$.

Point (2) is an immediate consequence of functional calculus, since $\Delta_{\sR}$ commutes with $E$ and with $Z_j^*Z_j$ for any $1\leq j\leq m$.

Let us prove Point (3). For $\kappa>0$, we consider the cone
\begin{equation*} 
S_\kappa:= \bigg\{\frac{g^*}{\sigma_P(E)}\leq \kappa\bigg\}\; \subset T^*M
\end{equation*}
and, for $1\leq j\leq m$, we also consider the cone
\begin{equation*}
T_\kappa^j=\bigg\{\frac{|h_{Z_j}|^2}{\sigma_P(E)}\leq \kappa\bigg\} \; \subset T^*M.
\end{equation*} 

For the moment, we assume $\J\neq\varnothing$. Then, the support of $\sigma_P(P_n^\mathcal{J})$  is contained in $S_{\frac1n}$, in $T_{\frac1n}^i$ for $i\notin\J$ and in the complementary set $(T_{\frac{1}{2n}}^j)^c$ for $j\in\J$. It follows that, in the limit $n\rightarrow +\infty$, $\sigma_P(P_n^\J)$ vanishes everywhere outside the set of points $(q,p)$ satisfying $g^*(q,p)=0$,
\begin{gather}
h_{Z_i}(q,p)=0, \ \ \forall i\notin\mathcal{J} \nonumber \\
h_{Z_j}(q,p)\neq 0, \ \ \ \forall j\in \mathcal{J}. \nonumber
\end{gather}
We note that these relations exactly define the set $\Sigma_{\mathcal{J}}$. \\
Conversely, let $(q,p)\in \Sigma_{\mathcal{J}}$. Our goal is to show that $\sigma_P(P^{\mathcal{J}}_n)(q,p)=1$ for sufficiently large $n\in\N^*$. It follows from a separate analysis of the principal symbol of each factor in the product \eqref{e:defPn}: 
\begin{itemize}
\item Since $(q,p)\in\Sigma$, there holds $g^*(q,p)=0$, hence
\begin{equation*}
\chi_n\Big(\frac{g^*}{\sigma_P(E)}\Big)=1;
\end{equation*}
\item For $i\notin \mathcal{J}$, since $h_{Z_i}(q,p)=0$, there holds
\begin{equation*}
\chi_n\Big(\frac{|h_{Z_i}|^2}{\sigma_P(E)}\Big)(q,p)=1;
\end{equation*}
\item For $j\in \mathcal{J}$, we know that $h_{Z_j}(q,p)\neq 0$. Hence, for $n$ sufficiently large, at $(q,p)$, 
\begin{equation*}
(1-\chi_n)\Big(\frac{|h_{Z_j}|^2}{\sigma_P(E)}\Big)(q,p)=1.
\end{equation*}
\end{itemize}
All in all, $\sigma_P(P_n^\mathcal{J})(q,p)=1$ for sufficiently large $n$, which proves Point (3) in case $\J\neq \varnothing$. 
For the proof in the case $\J=\varnothing$, we note that by definition of $E$ \eqref{e:defE}, we have 
\[ \chi_n\Big(\frac{\text{Id}-\Delta_{\sR}}{E}\Big)\prod_{i=1}^m \chi_n\Big(\frac{Z_i^*Z_i}{E}\Big)=0 \]
as soon as $n\geq m+1$. The rest of the proof for $\J=\varnothing$ is very similar to the case $\J=\varnothing$, for the sake of brevity we do not repeat it here.
\end{proof}

\subsection{Proof of Proposition \ref{t:splittinggeneral}}\label{s:proofofproposition}
We finally prove Proposition \ref{t:splittinggeneral}. We consider, for fixed $n\in\N$ and $\J\in\mathcal{P}$, the sequence $(P_n^\J\varphi_{k})_{k\in\N^*}$, which, thanks to Point (2) of Lemma \ref{l:propofPnJ}, is also a sequence of eigenfunctions of $-\Delta_{\sR}$ with the same eigenvalues as $\varphi_k$.
For any $A\in\Psi^0(M)$, using that $P_n^\J$ is self-adjoint, there holds
\begin{align*}
(AP_n^\J\varphi_{k},P_n^\J\varphi_{k}) =(P_n^\J AP_n^\J\varphi_{k},\varphi_{k})\underset{k\rightarrow +\infty}{\longrightarrow} \int_{S^*M}\sigma_P(P_n^\J)^2 \sigma_P(A)d\nu.
\end{align*}
Hence $(P_n^\J\varphi_{k})_{k\in\N^*}$ has a unique microlocal defect measure $\nu_n^\J=\sigma_P(P_n^\J)^2 \nu$.
Finally, we take $\nu^\J$ a weak-star limit of $(\nu^\J_n)_{n\in\N}$ and $\beta\nu^\varnothing$ a weak-star limit of $(\nu_n^\varnothing)_{n\in\N}$, with $\nu^\varnothing\in\mathscr{P}(S^*M)$ and $\beta\in [0,1]$. Up to successive extractions we can assume that all these weak-star limits are obtained with the same extraction $\sigma_1:\N^*\rightarrow \N^*$.

\begin{lemma} \label{l:supportnuJemptyset}
There holds $\nu^\varnothing(S\Sigma)=0$ and, for $\J\in\myP\setminus\{\varnothing\}$, $\nu^\J$ gives no mass to the complement of $S\Sigma_\J$ in $S^*M$. 
\end{lemma}
\begin{proof}
For $\J\in\myP$ (possibly $\J=\varnothing$) and any $A\in\Psi^0(M)$, using that $P_{\sigma_1(n)}^\J$ is self-adjoint, 
\begin{align*}
\int_{S^*M}\sigma_P(A)d\nu_{\sigma_1(n)}^{\mathcal{J}}&=\int_{S^*M}\sigma_P(P_{\sigma_1(n)}^\J)^2 \sigma_P(A)d\nu\\
&\underset{n\rightarrow+\infty}{\longrightarrow}\begin{cases} \int_{S^*M} \sigma_P(A)\mathbf{1}_{S\Sigma_\J}d\nu \text{ if $\J\neq\varnothing$}\\ \int_{S^*M} \sigma_P(A)\mathbf{1}_{U^*M}d\nu \text{ if $\J=\varnothing$}\end{cases}
\end{align*}
by the dominated convergence theorem and Lemma \ref{l:propofPnJ}, which proves the result.
\end{proof}

Let us summarize the situation: there exists an extraction $\sigma_1:\N^*\rightarrow \N^*$ such that for any $a\in\mathscr{S}_{\text{hom}}^0(M)$ (see Appendix \ref{a:pseudo}),
\begin{equation}\label{e:convsigma1}
\int_{S^*M}a d\nu_{\sigma_1(n)}^\J\underset{n\rightarrow +\infty}{\longrightarrow}\begin{cases}
\int_{S^*M}a d\nu^\J \text{ if $\J\neq \varnothing$} \\
\int_{S^*M}a \beta d\nu^\varnothing \text{ if $\J= \varnothing$}
\end{cases}
\end{equation}
and for any $n\in\N^*$ and any $A\in\Psi^0(M)$ with principal symbol $a$,
\begin{equation}\label{e:convink}
(AP_{\sigma_1(n)}^\J\varphi_k, P_{\sigma_1(n)}^\J\varphi_k)\underset{k\rightarrow +\infty}{\longrightarrow}\int_{S^*M}a d\nu_{\sigma_1(n)}^\J.
\end{equation}
Choosing first $n$ large, and then $k$ large, the combination of \eqref{e:convsigma1} and \eqref{e:convink} yields the existence of a function $r$ tending to $+\infty$ at $+\infty$ with $r(k)\ll k$ at $+\infty$ such that $P_{r(k)}^\J\varphi_{k}$ has a unique microlocal defect measure which is $\nu^\J$ for $\J\neq\varnothing$ and $\beta\nu^\varnothing$ for $\J=\varnothing$. 

Setting $\varphi_k^\J=P_{r(k)}^\J\varphi_k$, due to \eqref{e:sumisidentity}, we have
\begin{equation} \label{e:eqeigenfunctions}
\varphi_k=\varphi_k^\varnothing+\sum_{\J\in\mathcal{P}\setminus\{\varnothing\}} \varphi_k^\J.
\end{equation}
Let us prove that \eqref{e:eqeigenfunctions} implies \eqref{e:eqdefectmeasure}.  For that, we first recall an elementary lemma concerning the microlocal defect measure of a sum of sequences. It is proved in the case $p=2$ in \cite[Proposition 3.3]{gerard1991mesures} and a direct induction gives the general case.

\begin{lemma} \label{l:joint}
Let $p\in\N^*$ and $(u_k^1)_{k\in\N}, (u_k^2)_{k\in\N},\ldots, (u_k^p)_{k\in\N}$ be sequences of functions weakly converging to $0$, each with a unique microlocal defect measure $\mu_{1},\ldots,\mu_{p}$, respectively. We assume that $\mu_{1},\ldots,\mu_{p}$ are pairwise mutually singular. Then the sequence $(u_k^1+\cdots+u_k^p)_{k\in\N}$ has a unique microlocal defect measure, which is $\mu_1+\cdots+\mu_p$.
\end{lemma}

Combining Lemma \ref{l:supportnuJemptyset}, Lemma \ref{l:joint} and \eqref{e:eqeigenfunctions},  we obtain \eqref{e:eqdefectmeasure}, which finishes the proof of Proposition \ref{t:splittinggeneral}. 

\section{Preliminaries for the proof of Theorem \ref{t:mainQL}} \label{s:prelimmainQL}

This section is devoted to preliminary steps for the proof of Theorem \ref{t:mainQL}. We fix $m\geq 2$ and $\Delta_{\sR}=\Delta$ as in Section \ref{s:products}. Recall that the case $m=1$ has been handled in \cite[Proposition~3.2]{de2018spectral}.

\subsection{Reduction to a fixed $\mathcal{J}\in\mathcal{P}\setminus \{\varnothing\}$.}  The first step in the proof consists in reducing the analysis to the part of the QL above $\Sigma_\J$ for some $\J\in\myP\setminus\{\varnothing\}$, and it is achieved thanks to Proposition~\ref{t:splittinggeneral}. Thanks to Proposition \ref{t:splittinggeneral}, it is possible to assume that $(\varphi_k)_{k\in\N^*}$ is a sequence of eigenfunctions with eigenvalue tending to $+\infty$, and with a unique microlocal defect measure $\nu$, which can be assumed to be supported in $S\Sigma$. Indeed, thanks to Proposition \ref{t:splittinggeneral}, we can even assume that all the mass of $\nu$ is contained in $S\Sigma_\J$ for some $\mathcal{J}\in\mathcal{P}\setminus \{\varnothing\}$, i.e., $\nu=\nu^\J$ (simply by considering only the term $\varphi_k^{\mathcal{J}}$). Once we have established the decomposition
\begin{equation} \label{e:decdQJ}
\nu^\J=\int_{\mathbf{S}_\J}\nu^\J_s dQ^\J(s),
\end{equation}
Theorem \ref{t:mainQL} follows by just gluing all pieces of $\nu$ together thanks to Proposition \ref{t:splittinggeneral}.

Therefore, in order to establish Theorem \ref{t:mainQL}, we assume that the unique microlocal defect measure of $(\varphi_k)_{k\in\N^*}$ has no mass outside $S\Sigma_\J$ for some $\mathcal{J}\in\mathcal{P}\setminus \{\varnothing\}$. Due to the analysis done in Section \ref{s:proofofproposition}, there exists a function $r(k)$ tending to $+\infty$ as $k\rightarrow+\infty$ such that $\varphi_k^\J=P_{r(k)}^\J\varphi_k$ has the same microlocal defect measure as $\varphi_k$. Thus, to analyze this microlocal defect measure, we can replace without loss of generality $\varphi_k$ by $P_{r(k)}^\J\varphi_k$ which is still an eigenfunction with same eigenvalue. The new $\varphi_k$ satisfies \eqref{e:eqwithP} below. By symmetry, we can also assume that $\mathcal{J}=\{1,\ldots,J\}$ with $J=\text{Card}(\mathcal{J})$. 

To sum up, the sequence $(\varphi_k)_{k\in\N^*}$ that we consider is no more a general sequence of normalized eigenfunctions with eigenvalues tending to $+\infty$, but it satisfies the following property:
\begin{property} \label{p:property}
The sequence $(\varphi_k)_{k\in\N^*}$ is a bounded sequence of eigenfunctions of $-\Delta$ labeled with increasing eigenvalues tending to $+\infty$, and with unique microlocal defect measure $\nu$. Moreover, there exist $J\leq m$ and $r(k)\rightarrow +\infty$ as $k\rightarrow +\infty$ such that
\begin{equation} \label{e:eqwithP}
\varphi_k=P^{\mathcal{J}}_{r(k)}\varphi_k
\end{equation}
for $\mathcal{J}=\{1,\ldots,J\}$ and for any $k\in\N^*$, where $P_n^\J$ is defined in \eqref{e:defPn}. In particular, $\nu$ has no mass outside $S\Sigma_\J$.
\end{property}

\subsection{Illustration and sketch of proof}
Since the rest of the proof is slightly involved, in this section we provide an illustration and a sketch of proof. The proof is written in full details in Section \ref{s:prooftheo}. Logically, one may omit the discussion which follows and proceed directly to the next section.

\paragraph{Illustration of Theorem \ref{t:mainQL}.} To get an intuition of Theorem \ref{t:mainQL}, fix $(n_1,\ldots,n_m)\in\N^m$, and consider a sequence of normalized eigenfunctions $(\psi_k)_{k\in\N^*}$ of $-\Delta$ given in a tensor form as in Remark \ref{r:tensoreigen}, such that, for any $k\in\N^*$, $\psi_k$ is also, for any $1\leq j\leq m$, a sequence of eigenfunctions of $R_j$ with eigenvalue tending to $+\infty$, and of $\Omega_j$ with eigenvalue ${2n_j+1}$. We notice that any associated QL $\nu$ is supported in $S\Sigma$: it follows directly from the arguments developed in the proof of Proposition \ref{t:splittinggeneral}, since for any $1\leq j\leq m$, the eigenvalues with respect to $R_j^2$ (which plays the role of $Z_j^*Z_j$ in Assumption \ref{a:a}) are much larger than the eigenvalues with respect to $-\Delta$.

Let $\J=\{1,\ldots,m\}\in\mathcal{P}$. Then, $\nu$ is necessarily invariant under the vector field $\vec\rho^\J_s$, where $s=(s_1,\ldots,s_m)\in\mathbf{S}_\J$ is defined by $s_j=\frac{2n_j+1}{2n_1+1+\cdots+2n_m+1}$ for $j=1,\ldots,m$. To see it, we set 
\begin{equation*}
R=\frac{\sum_{j=1}^m (2n_j+1)R_j}{\sum_{j=1}^m 2n_j+1}
\end{equation*}
and we note that for any $A\in \Psi^0(\M)$, we have 
\begin{equation*}
([A,R]\psi_k,\psi_k)=(AR\psi_k,\psi_k)-(A\psi_k,R\psi_k)=0
\end{equation*}
since $\psi_k$ is an eigenfunction of $R$. In the limit $k\rightarrow +\infty$, taking the principal symbol, we obtain 
\[ \int_{S\Sigma} (\rho_s^\J a)d\nu=0, \] 
where $a=\sigma_P(A)$. Since it is true for any $a\in\mathscr{S}^0(\M)$ (the set of symbols of order $0$, see Appendix \ref{a:pseudo} for notations), this implies $\vec\rho^\J_s\nu=0$. Hence, for such sequences $(\psi_k)_{k\in\N^*}$, any QL $\nu$ is invariant under $\vec\rho_s^\J$ and $Q^\J$ is a Dirac mass on $s$ in the decomposition \eqref{e:decdQJ}.

Roughly speaking, any QL supported on $S\Sigma$ is a linear combination of sequences as in the above example, for different $\J\in\mathcal{P}\setminus \{\varnothing\}$ and different $s\in\mathbf{S}_\J$.

\paragraph{Roles of $R_j$ and $\Omega_j$.} The operators $R_j$ and $\Omega_j$ play a key role in the proofs of Theorem \ref{t:mainQL}, \ref{t:conversereplacement} and \ref{t:goodconversereplacement}. As illustrated in the previous paragraph, the operators $\Omega_j$ are linked with the parameters $s\in \mathbf{S}_\J$: in some sense, once the eigenfunctions have been orthogonally decomposed with respect to the operators $R_j$ and $\Omega_j$ (as explained in Section \ref{s:spec2}), the ratios between the $\Omega_j$-s determine the invariance property of the associated QLs through the parameter $s$ and the vector field $\vec\rho_s^\J$. On the other side, the operators $R_j$ `determine' the microlocal support of the associated QLs, for example they determine the element $\J\in\mathcal{P}\setminus\{\varnothing\}$ for which the QL concentrates on $S\Sigma_\J$.

\paragraph{Sketch of proof.} In order to simplify the presentation, in this sketch of proof, we assume that $\mathcal{J}=\{1,\ldots,m\}$ and we omit the use of subscripts involving $\mathcal{J}$, but the ideas are similar for any $\mathcal{J}\in\mathcal{P}\setminus \{\varnothing\}$. 

We notice that \eqref{e:eqwithP} together with the fact that $\mathcal{J}=\{1,\ldots,m\}$ ensures that $\varphi_k$ has no zero Fourier modes along the $z_j$ variables for any $j\in\{1,\ldots,m\}$. Let us use the decomposition \eqref{e:decompolaplacienintro} to write each $\varphi_k$ as a sum of eigenfunctions of operators of the form $\sum_{j=1}^{m}\big(2n_j+1\big)R_j$ for some integers $n_1,\ldots,n_m$:
\begin{align} \label{e:decompoeigenintro}
&\qquad \qquad \qquad\varphi_k=\sum_{(n_1,\ldots,n_m)\in\mathbb{N}^m} \varphi_{k,(n_1,\ldots,n_m)}, \\
&\text{with} \ \  \Omega_j\varphi_{k,(n_1,\ldots,n_m)}=(2n_j+1)\varphi_{k,(n_1,\ldots,n_m)}, \quad \forall \; 1\leq j\leq m. \nonumber
\end{align}

We will see in Section \ref{s:spec2} that the decomposition \eqref{e:decompoeigenintro} is orthogonal, and therefore each eigenfunction $\varphi_{k,(n_1,\ldots,n_m)}$ has the same eigenvalue $\lambda_k$ as $\varphi_k$. Then, we do a careful analysis of this decomposition into modes, which, in the limit $k\rightarrow +\infty$, gives the disintegration $\nu=\int_{\mathbf{S}}\nu_sdQ(s)$. 

We take a partition of $\N^m$ into $2^N$ thin positive cones $C_\ell^N$ (with $0\leq \ell\leq 2^N-1$) with vertex $V=\big(-\frac12,\ldots,-\frac12\big)$ (see Figure \ref{f:1}), and we group the eigenfunctions $\varphi_{k,(n_1,\ldots,n_m)}$ with index $(n_1,\ldots,n_m)$ in the same cone $C_\ell^N$ into a single eigenfunction
\begin{equation*}
\varphi_{k,\ell}^N=\sum_{(n_1,\ldots,n_m)\in C_\ell^N} \varphi_{k,(n_1,\ldots,n_m)}
\end{equation*}
of $-\Delta$. Since the cones $C_\ell^N$ partition $\N^m$, we have
\begin{equation} \label{e:decompoeigencones}
\varphi_k=\sum_{\ell=0}^{2^N-1} \varphi_{k,\ell}^N
\end{equation}
for any $N\in\N^*$. 

Taking a microlocal defect measure $\nu_\ell^N$ in each sequence $(\varphi_{k,\ell}^N)_{k\in\N^*}$ and making $N\rightarrow +\infty$, we obtain from \eqref{e:decompoeigencones} the disintegration $\nu=\int_{\mathbf{S}}\nu_s dQ(s)$. This follows from the fact that for any $s=(s_1,\ldots,s_m)\in\mathbf{S}$, there exists a sequence of positive cones $C_{\ell(N)}^N$ in the partition degenerating as $N\rightarrow +\infty$ to the half-line with vertex $V$ and parametrized by $s$: for this, choose a sequence of cones $(C_{\ell(N)}^N)_{N\in\N}$ for which the indices $(n_{1,N},\ldots,n_{m,N})\in (\N^m)^\N$ satisfy
\[ \Big(\frac{2n_{1,N}+1}{2n_{1,N}+1+\cdots+2n_{1,N}+1},\ldots,\frac{2n_{1,N}+1}{2n_{1,N}+1+\cdots+2n_{1,N}+1}\Big)\underset{N\rightarrow +\infty}{\longrightarrow} (s_1,\ldots,s_m). \]

For this choice of cones $C_{\ell(N)}^N$, $dQ(s)$ accounts for the relative mass, in the limit $N\rightarrow +\infty$, of the eigenfunction $\varphi_{k,\ell(N)}^N$ in the sum \eqref{e:decompoeigencones}.

The invariance property $\vec{\rho}_s\nu_s=0$ can be seen from the fact that, for any large $N$ and any $\ell=\ell(N)$ satisfying $0\leq \ell\leq 2^N-1$, each eigenfunction $\varphi_{k,(n_1,\ldots,n_m)}$ with $(n_1,\ldots,n_m)\in C_{\ell(N)}^N$ is indeed an eigenfunction of the operator
\begin{equation*}
\sum_{i=1}^m \Big(\frac{2n_i+1}{2n_1+1+\cdots+2n_m+1}\Big)R_i
\end{equation*}
and thus a quasimode of $R_s=s_1R_1+\cdots+s_mR_m$ if $s=(s_1,\ldots,s_m)\in\mathbf{S}$ denotes the parameter of the limiting half-line (with vertex $V$) of the positive cones $\smash{C_{\ell(N)}^N}$ as $N\rightarrow +\infty$. Hence, $\smash{\varphi_{k,\ell}^N}$ is an approximate eigenfunction of $R_s$, from which it follows by a classical argument that $\nu_s$ is invariant under the vector field $\vec\rho_s$ of $\rho_s=(\sigma_P(R_s))_{|\Sigma}$.

\subsection{Spectral and symplectic preliminaries} \label{s:spec2} \label{s:symplecticHamilt}

In this section, we gather a few facts which will be used in the proof of Theorem \ref{t:mainQL}.

We use the notations introduced in Section \ref{s:products}, notably $R_j$, $\Omega_j$ for the operators defined through a Fourier expansion with respect to the $z_j$-variables, and satisfying \eqref{e:decompolaplacienintro}. 

\begin{lemma} \label{l:propofOmegaandR} The following properties hold:
\begin{enumerate}
\item The operator $\Omega_j$, seen as an operator on the $j$-th copy of $\mathbf{H}$, has eigenvalues $2n+1$, $n\in\mathbb{N}$.
\item $[\Omega_i,\Omega_j]\varphi=[R_i,R_j]\varphi=[\Omega_i,R_j]\varphi=0$ for any $i,j$ and any $\varphi$ whose $0$-th Fourier mode with respect to $z_i$ and $z_j$ vanishes.
\item The operators $R_j$ and $\Omega_j$ are pseudodifferential operators in any cone of $T^*\M$ whose intersection with some conic neighborhood of the set $\{p_{z_j}=0\}$ is reduced to $0$, in particular on $\Sigma_\J$.
\item The Hamiltonian vector field associated to the Hamiltonian $\sigma_P(R_j)=|p_{z_j}|$ is $\text{sgn}(p_{z_j})\partial_{z_j}$.
\item The Hamiltonian flow $\theta_j(\cdot)$ associated to $\sigma_P(\Omega_j)$ is stationary on $\Sigma_\J$ when $j\in\J$.
\end{enumerate}
\end{lemma}
\begin{proof}
Point 1 is proved in \cite[Section 3.1]{de2018spectral}. Point 2 follows from the definition of $\Omega_i, R_j$ in Section \ref{s:productsoftheheisenberggroup} (they are defined only on the direct sum of the eigenspaces corresponding to non-zero eigenvalues of the operators $\partial_{z_i}$ and $\partial_{z_j}$).

Point 3 follows from the fact that in any conic set $U\subset S^*\M$ which is the complement of a conic neighborhood of $\{p_{z_j}=0\}$, $|p_{z_j}|$ is infinitely differentiable. It is indeed an elliptic first-order classical symbol in $U$. The standard quantization of $|p_{z_j}|$ is $R_j$, which is an elliptic first-order pseudodifferential operators when acting on functions microlocalized in $U$. Then $\Omega_j=\Delta_j/R_j$ is also an elliptic first-order pseudodifferential operators when acting on functions microlocalized in $U$.

Point 4 then follows from a direct computation. 

For Point 5, we notice that
\[ \sigma_P(\Omega_j)=\frac{h_{X_j}^2+h_{Y_j}^2}{|h_{\partial_{z_j}}|} \]
in the cones where $\Omega_j$ is a pseudodifferential operator. Since $h_{X_j}=h_{Y_j}=0$ on $\Sigma$,  this Hamiltonian vector field vanishes on $\Sigma_\J$, and $\theta_j$ is stationary on $\Sigma_{\J}$.
\end{proof}

\section{Proof of Theorem \ref{t:mainQL}} \label{s:prooftheo}   \label{s:QEtheorem}
In this section, building upon Section \ref{s:prelimmainQL}, we prove Theorem \ref{t:mainQL}. In the sequel, the notation $(\cdot, \cdot)$ stands for the $L^2(\M)$ scalar product, and the associated norm is denoted by $\lVert \cdot\rVert_{L^2}$. Also, we recall that we assumed $\mathcal{J}=\{1,\ldots,J\}$.

\subsection{Positive cones} We consider the quadrant 
\begin{equation*}
\mathcal{C}=\Big\{(x_1,\ldots,x_J)\in\R^J \mid x_j\geq -\frac12 \text{ for any } 1\leq j\leq J \Big\}.
\end{equation*}
and we define
\[ V=\Big(-\frac12,\ldots,-\frac12\Big)\in\R^J. \]
A positive cone with vertex at $V$ is a subset $K$ of $\mathcal{C}\setminus\{V\}$ such that 
\[ W\in K\Rightarrow V+\lambda(W-V)\in K \] 
for any $\lambda>0$ and any $W\in\mathcal{C}$.
We notice that any positive cone $K$ with vertex at $V$ can be split into two non-empty positive cones $K_1, K_2$ with vertex at $V$: for this, we choose a half-space $H$ containing $V$ in its boundary and containing some of the points of $K$ but not all, and we set $K_1=K\cap H$ and $K_2=K\cap H^c$. We call this a ``bisection of $K$''.

We now define a sequence of partitions of $\mathcal{C}$ into positive cones with vertex at $V$. We first partition $\mathcal{C}$ into $2$ cones by bisection of $\mathcal{C}$. This gives a first partition of $\mathcal{C}$. Then we obtain a second partition of $\mathcal{C}$ by bisecting each of these two cones. And so on and so forth, refining our partition at each step by bisecting all cones of the previous partition. The $N$-th partition is made of $2^N$ positive cones with vertex at $V$.

Formalizing this, these positive cones $C_\ell^N\subset \mathcal{C}$, for $N\in\mathbb{N}^*$ and $0\leq \ell\leq 2^N-1$, satisfy the following properties (see Figure \ref{f:1} below):
\begin{enumerate}[(1)]
\item For any $N\in\mathbb{N}^*$ and any $0\leq \ell\leq 2^N-1$, $C_\ell^N$ is a positive cone with vertex at $V$, i.e.,
\begin{equation*}
V+\lambda (W-V)\in C_\ell^N, \ \ \ \forall \lambda>0,\ \forall W\in C_\ell^N;
\end{equation*} 
\item For any $N\in\mathbb{N}^*$, $(C_\ell^N)_{0\leq \ell\leq 2^N-1}$ is a partition of $\mathcal{C}$, i.e.,
\begin{equation*}
\bigcup_{\ell=0}^{2^N-1} C_\ell^N=\mathcal{C} \text{ \ \  and \ \ } C_\ell^N\cap C_{\ell'}^N=\varnothing, \ \forall \ell\neq \ell';
\end{equation*}
\item Each partition is a refinement of the preceding one: for any $N\geq 2$ and any $0\leq \ell\leq 2^N-1$, there exists a unique $0\leq \ell'\leq 2^{N-1}-1$ such that $C_\ell^N\subset C_{\ell'}^{N-1}$.
\end{enumerate}
We also impose that the aperture of the positive cones $C_0^N,\ldots,C_{2^N-1}^N$ tends uniformly to $0$ as $N\rightarrow +\infty$. To give a precise statement of this last assumption, we denote by $\mathscr{L}$ the set of half-lines issued from $V$ and contained in $\mathcal{C}$, and we note that $\mathscr{L}$ is parametrized by $s\in\mathbf{S}_\J$. We assume the following
\begin{enumerate}[(4)]
\item There exists $d:\N\rightarrow \R^+$ with $d\rightarrow 0$ as $N\rightarrow +\infty$, such that for any $N\in\N$, any $\ell\in \{0,\ldots,2^N-1\}$ and any $s',s''$ parametrizing lines in $C_\ell^N$, we have 
\begin{equation} \label{e:diffs}
\lVert s'-s''\rVert_{1}\leq d(N).
\end{equation}
As a consequence, for any $L\in\mathscr{L}$ parametrized by $s\in\mathbf{S}_\J$, there exists a subsequence $(C_{\ell(s,N)}^N)_{N\in\mathbb{N}^*}$ that converges to $L$ in the sense that
\[ \bigcap_{N\in\N} C_{\ell(s,N)}^N=L. \]
\end{enumerate}

\begin{figure}[ht] 
\includegraphics[width=7cm]{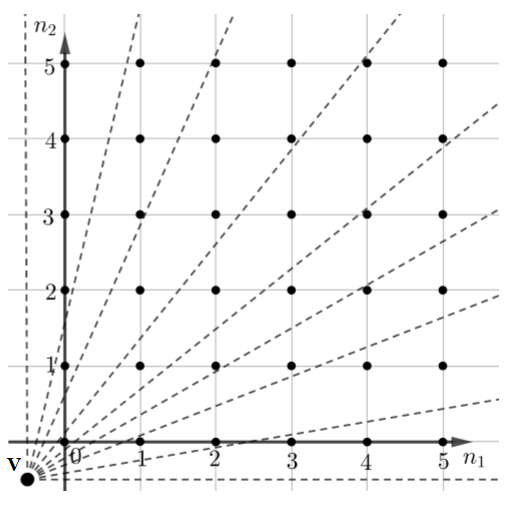}
\caption{The positive cones $C_\ell^N$, for $J=2$, $N=3$.}
\label{f:1}
\end{figure}

\begin{remark}
The positive cones $C_\ell^N$ can be seen as positive sub-cones of the Heisenberg fan (whose definition has been recalled in Section \ref{s:bibliocomments}).
\end{remark}

\subsection{Spectral decomposition of Quantum Limits}  Recall that we assumed $\mathcal{J}=\{1,\ldots,J\}$. We notice that \eqref{e:defPn} and \eqref{e:eqwithP} guarantee that $\varphi_k$ has no zero Fourier modes along the $z_j$ variables for any $j\in \J$. Using Point 2 of Lemma \ref{l:propofOmegaandR}, we can simultaneously diagonalize the operators $\Omega_j$ for $j\in\J$. This yields a decomposition of $\varphi_k$ on the joint eigenspaces of the $\Omega_j$ for $j\in\J$: according to \eqref{e:decompoeigenintro}, we obtain for any $(n_j)\in\N^\J$, $k\in\N^*$ and $j\in\J$ a function $\varphi_{k,(n_1,\ldots,n_J,0,\ldots,0)}$ such that 
\begin{equation*}
\Omega_j \varphi_{k,(n_1,\ldots,n_J,0,\ldots,0)}=(2n_j+1)\varphi_{k,(n_1,\ldots,n_J,0,\ldots,0)}.
\end{equation*}
Moreover we have
\begin{equation} \label{e:orthodecompophik}
\varphi_{k}=\sum_{\ell=0}^{2^N-1} \varphi_{k,\ell}^N
\end{equation}
where 
\begin{equation}\label{e:neededref}
\varphi_{k,\ell}^N=\sum_{(n_1,\ldots,n_J)\in C_{\ell}^N}\varphi_{k,(n_1,\ldots,n_J,0,\ldots,0)}.
\end{equation}
For any $N\in\mathbb{N}^*$ and any $0\leq \ell\leq 2^N-1$, we take 
\[ \nu_\ell^N \text{ a microlocal defect measure of the sequence } (\varphi_{k,\ell}^N)_{k\in\mathbb{N}^*}. \] 
By diagonal extraction in $k\in\N^*$ (which we omit in the notations), we can assume that any of these microlocal defect measures is obtained with respect to the same subsequence.

\begin{lemma}  \label{l:propnuellN}
The following properties hold:
\begin{enumerate}[(1)]
\item All the mass of $\nu_\ell^N$ is contained in $S\Sigma_\J$ for any $N\in\N^*$ and any $0\leq \ell\leq 2^N-1$;
\item  For $N\in\N^*$ and $\ell\neq \ell'$ with $0\leq \ell,\ell'\leq 2^N-1$, the joint microlocal defect measure (see Definition \ref{d:mdm}) of $(\varphi_{k,\ell}^N)_{k\in\mathbb{N}^*}$ and $(\varphi_{k,\ell'}^N)_{k\in\mathbb{N}^*}$ vanishes. In particular, for any $N\in\N^*$,
\begin{equation} \label{e:nuasasum}
\nu=\sum_{\ell=0}^{2^N-1} \nu_\ell^N.
\end{equation}
and also for any $N_0\leq N$ fixed and any $0\leq \ell_0\leq 2^{N_0}-1$,
\begin{equation} \label{e:nuellNasasum}
\nu_{\ell_0}^{N_0}=\sum_{\substack{\ell \text{ such that }\\ C_\ell^N\subset C_{\ell_0}^{N_0}}}\nu_\ell^N.
\end{equation}
\end{enumerate}
\end{lemma}
\begin{proof}
The proof mainly relies on averaging techniques (see also Appendix \ref{s:contact} for a result obtained by these techniques in the much simpler context of flat contact sub-Laplacians).

We first prove Point (1). Applying $P_{r(k)}^{\mathcal{J}}$ (see \eqref{e:eqwithP}) on both sides of \eqref{e:orthodecompophik}, we get that
\begin{equation*}
\sum_{\ell=0}^{2^N-1} P^{\mathcal{J}}_{r(k)}\varphi_{k,\ell}^N=P^{\mathcal{J}}_{r(k)}\varphi_{k}=\varphi_{k}=\sum_{\ell=0}^{2^N-1} \varphi_{k,\ell}^N.
\end{equation*}
We observe that $P_n^{\mathcal{J}}\in\Psi^0(\M)$ commutes with the operators $\Omega_j$ for $j\in\mathcal{J}$, thanks to its explicit expression \eqref{e:defPn}. Hence $P^{\mathcal{J}}_{r(k)}(\varphi_{k,(n_1,\ldots,n_m,0,\ldots,0)})=\varphi_{k,(n_1,\ldots,n_m,0,\ldots,0)}$ for any $(n_1,\ldots,n_m)\in\N^m$, and we deduce
\[ \varphi_{k,\ell}^N=P^{\mathcal{J}}_{r(k)}\varphi_{k,\ell}^N. \]
Point (1) now follows from the fact that $\sigma_P(P^{\mathcal{J}}_{r(k)})\rightarrow \mathbf{1}_{\Sigma_\J}$ as $k\rightarrow +\infty$ (see Lemma \ref{l:propofPnJ}).

We now turn to the proof of Point (2). 
 
Let $B\in\Psi^0(\M)$ be microlocally supported in a conic set in which $R_j, \Omega_j$ act as first-order pseudodifferential operators for any $j\in\J$. A typical example of microlocal support for $B$ is given by any conic subset of $T^*\M$ whose intersection with some conic neighborhood of the set $\{p_{z_j}=0\}$ is reduced to $0$, for any $j\in\J$. We set $U(t)=U(t_1,\ldots,t_J)=e^{i(t_{1}\Omega_{1}+\cdots+t_J\Omega_J)}$ for $t=(t_1,\ldots,t_J)\in(\R/2\pi\Z)^J$. 

The average of $B$ is then defined by (see \cite{weinstein1977asymptotics})
\begin{equation} \label{e:averageofb}
A=\int_{(\R/2\pi\Z)^J}U(-t)BU(t)dt.
\end{equation}
\begin{fact}\label{f:average}
There holds $[A,\Omega_j]=0$ for any $1\leq j\leq J$. Also, $\sigma_P(A)=\sigma_P(B)$ on $S\Sigma_\J$.
\end{fact} 
We postpone the proof of this fact to the end of the present section.
 
Let $N,\ell,\ell'$ be as in the statement of Point (2). The joint microlocal defect measure of $(\varphi_{k,\ell}^N)_{k\in\mathbb{N}^*}$ and $(\varphi_{k,\ell'}^N)_{k\in\mathbb{N}^*}$ has no mass outside $S\Sigma_\J$ (due to the fact that $\varphi_{k,\ell}^N=P^\J_{r(k)}\varphi_{k,\ell}^N$).  This, combined with the second part of Fact \ref{f:average}, yields 
 \begin{equation}\label{e:limittends0}
 (B\varphi_{k,\ell}^N,\varphi_{k,\ell'}^N)- (A\varphi_{k,\ell}^N,\varphi_{k,\ell'}^N) \underset{k\rightarrow +\infty}{\longrightarrow} 0.
 \end{equation}
 \begin{fact}  \label{r:ortho}
Let $D\in\Psi^0(\M)$ satisfy $[D,\Omega_j]=0$ for any $j\in \mathcal{J}$. Let $f,g$ be each in a joint eigenspace of the $\Omega_j$, meaning that for any $j\in\J$, $\Omega_jf=n_jf$, $\Omega_jg=n_j'g$ for some $n_j,n_j'\in 2\N+1$. We assume that there exists $j\in\J$ such that $n_j\neq n_j'$. Then $(Df,g)=0$ [because $D$ leaves any joint eigenspace of the $\Omega_j$ ($j\in\J$) invariant, and the eigenspaces are orthogonal].
\end{fact}

Since $A$ commutes with $\Omega_j$ for any $1\leq j\leq J$, by \eqref{e:neededref} and Fact \ref{r:ortho}, we know that $(A\varphi_{k,\ell}^N,\varphi_{k,\ell'}^N) =0$. Hence, plugging into \eqref{e:limittends0}, we get that $ (B\varphi_{k,\ell}^N,\varphi_{k,\ell'}^N)$ tends to $0$ as $k\rightarrow +\infty$. Using this result for all possible $B\in\Psi^0(M)$ with microlocal support satisfying the property recalled at the beginning of the proof, we obtain that the joint microlocal defect measure of $(\varphi_{k,\ell}^N)_{k\in\N^*}$ and of $(\varphi_{k,\ell'}^N)_{k\in\N^*}$ vanishes. Evaluating $(B\varphi_k,\varphi_k)$ in the limit $k\rightarrow +\infty$ and using \eqref{e:orthodecompophik}, we conclude the proof of Point (2).
\end{proof}
\begin{proof}[Proof of Fact \ref{f:average}]
For $1\leq j\leq J$, since 
\begin{equation*}
\frac{d}{dt_{j}}U(-t)BU(t)=iU(-t)[B,\Omega_{j}]U(t),
\end{equation*}
integrating in the $t_j$ variable, using that $\Omega_j$ commutes with $U(t)$, and that $\exp(2i\pi\Omega_j)=\Id$ (since the eigenvalues of $\Omega_j$ belong to $\N$),  we get that $[A,\Omega_j]=0$ for any $1\leq j\leq J$. 

For $1\leq j\leq J$, recall that $\theta_j(\cdot)$ denotes the flow of the Hamiltonian vector field of $\sigma_P(\Omega_j)$. By Egorov's theorem, $A$ has principal symbol
\begin{equation} \label{e:EgorovforA}
a:=\sigma_P(A)=\int_{(\R/2\pi\Z)^J}\sigma_P(B)\circ \theta_1(t_1) \circ \cdots \circ \theta_J(t_J)  \ dt
\end{equation}
(see \cite[Lemma 6.1]{de2018spectral} for similar arguments). Since $\theta_j$ is stationary on $\Sigma_\J$ for $1\leq j\leq J$ (see Lemma \ref{l:propofOmegaandR}), we get that $\sigma_P(A)=\sigma_P(B)$ on $S\Sigma_\J$.
\end{proof}

\subsection{Disintegration of measures} From the equality \eqref{e:nuasasum} taken in the limit $N\rightarrow +\infty$, we will deduce in this section that 
\[ \nu^\J=\int_{\mathbf{S}_\J} \nu^\J_{s}dQ^\J(s). \] 
Note that a simple Fubini argument does not suffice since $Q^\J$ is not the Lebesgue measure in general (it may contain Dirac masses). Instead, we have to adapt the proof of the classical disintegration of measure theorem (see \cite{rohlin1962fundamental}).

First of all, we define a measure $Q^\J$ over the simplex $\mathbf{S}_\J$ as follows. It has been explained at the beginning of Section \ref{s:prooftheo} that the set $\mathscr{L}$  of half-lines issued from $V$ and contained in $\mathcal{C}$ is parametrized by $s\in\mathbf{S}_\J$. For $N\in\mathbb{N}^*$ and $0\leq \ell\leq 2^N-1$, we consider the subset of $\mathbf{S}_\J$ given by
\begin{equation} \label{e:defClN}
\mathbf{S}_\ell^N=\big\{ s\in\mathbf{S}_\J, \ s \text{ parametrizes a half-line of $\mathscr{L}$ contained in } C_\ell^N \big\}.
\end{equation} 
Then we define  
\begin{equation} \label{e:defQ}
Q^\J(\mathbf{S}_\ell^N)=\nu^N_\ell(S\Sigma)
\end{equation}
and we extend it by finite additivity and complementation to the algebra of subsets of $\mathbf{S}_\J$ generated by the $\mathbf{S}_\ell^N$ for $N\in\N$ and $\ell\in\{0,\ldots,2^N-1\}$. Due to \eqref{e:nuellNasasum}, $Q^\J$ is a sigma-additive function on this algebra. Therefore, by the Caratheodory (or Hahn-Kolmogorov) extension theorem, \eqref{e:defQ} defines a (unique) non-negative Radon measure $Q^\J$ on the sigma-algebra generated by the cones $C_\ell^N$, which consists of the Borel sets of $\mathbf{S}_\J$.

Given $N\geq 1$, $0\leq \ell\leq 2^N-1$ and a continuous function $f:S\Sigma_\J\rightarrow \R$, we set 
\begin{equation} \label{e:deffjm}
f_\ell^N=\frac{1}{\nu_\ell^N(S\Sigma_\J)}\int_{S\Sigma_\J}fd\nu_\ell^N
\end{equation}
if $\nu_\ell^N(S\Sigma_\J)\neq 0$, and $f_\ell^N=0$ otherwise.
\begin{proposition} \label{e:convergencemuj}
Given any continuous function $f:S\Sigma\rightarrow \R$, for $Q^\J$-almost all $s\in \mathbf{S}_\J$, there exists a real number $e(f)(s)$ such that 
\begin{equation*}
f_{\ell(s,N)}^N\underset{N\rightarrow +\infty}{\longrightarrow} e(f)(s),
\end{equation*}
where, for any $N\in\mathbb{N}^*$, $\ell(s,N)$ is the unique integer $0\leq \ell(s,N)\leq 2^N-1$ such that $s\in\mathbf{S}_{\ell(s,N)}^N$. \\ In the sequel, we call $\ell(s,N)$ the approximation at order $N$ of $s$.
\end{proposition}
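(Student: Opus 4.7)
The statement is a Lebesgue/martingale differentiation-type result adapted to the nested partitions $(\bold{S}_\ell^N)$. The plan is to interpret the numbers $f_\ell^N$ as conditional expectations of a single Radon--Nikodym derivative with respect to the filtration generated by the partitions, and then invoke a classical almost-sure convergence result.

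First, I would establish a key consistency property: for every $N\in\N^*$ and every $0\le\ell\le 2^N-1$,
\begin{equation*}
\nu_\ell^N \;=\; \sum_{\ell'\,:\,C_{\ell'}^{N+1}\subset C_\ell^N}\nu_{\ell'}^{N+1}.
\end{equation*}
Indeed, by definition of the positive cones and of $\varphi_{k,\ell}^N$, the equality $\varphi_{k,\ell}^N=\sum_{\ell'\subset \ell}\varphi_{k,\ell'}^{N+1}$ holds pointwise. Expanding $(\Op(a)\varphi_{k,\ell}^N,\varphi_{k,\ell}^N)$ and sending $k\to+\infty$ along the common diagonal extraction, the off-diagonal terms vanish thanks to Point (2) of Lemma \ref{l:propnuellN} (applied at level $N+1$), and the diagonal terms give the announced identity. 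Consequently, defining for a fixed continuous $f:S\Sigma\to\R$ the set function
\begin{equation*}
R_f(\bold{S}_\ell^N)=\int_{S\Sigma_\J} f\,d\nu_\ell^N
\end{equation*}
on the algebra $\mathcal{A}$ generated by $\bigcup_N\{\bold{S}_\ell^N\}$ yields a well-defined finitely additive signed set function which is dominated by $\|f\|_\infty\,Q^\J$. By the Carath\'eodory extension theorem, $R_f$ extends to a finite signed Borel measure on $\bold{S}_\J$, absolutely continuous with respect to $Q^\J$.

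Next, let $F=\frac{dR_f}{dQ^\J}\in L^1(\bold{S}_\J,Q^\J)$ be the Radon--Nikodym derivative. Denote by $\mathcal{F}_N$ the finite $\sigma$-algebra generated by the partition $(\bold{S}_\ell^N)_{\ell}$; by Property (3) of the cones, the $\mathcal{F}_N$ are increasing, and by Property (4) the $\bold{S}_\ell^N$ shrink uniformly to points, so $\sigma(\bigcup_N\mathcal{F}_N)$ coincides with the Borel $\sigma$-algebra of $\bold{S}_\J$. Observing that
\begin{equation*}
f_\ell^N \;=\; \frac{R_f(\bold{S}_\ell^N)}{Q^\J(\bold{S}_\ell^N)} \;=\; \bigl(\mathbb{E}[F\mid \mathcal{F}_N]\bigr)(s)\quad\text{for any }s\in\bold{S}_\ell^N\text{ with }Q^\J(\bold{S}_\ell^N)\neq 0,
\end{equation*}
the martingale convergence theorem (or equivalently the Lebesgue differentiation theorem along the nested partitions, as in \cite{rohlin1962fundamental}) gives $f_{\ell(s,N)}^N\to F(s)$ for $Q^\J$-almost every $s\in\bold{S}_\J$. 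Setting $e(f)(s):=F(s)$ concludes the proof.

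The main technical obstacle is the consistency step establishing $\nu_\ell^N=\sum_{\ell'\subset \ell}\nu_{\ell'}^{N+1}$: it is what turns the family $(f_\ell^N)$ into a genuine martingale rather than an arbitrary sequence of averages, and it relies essentially on the vanishing of joint microlocal defect measures between different cones together with the diagonal extraction used to define all $\nu_\ell^N$ from the same subsequence. Once this is in place, the rest is standard measure theory.
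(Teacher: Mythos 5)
Your proof is correct, but it takes a genuinely different route from the paper. The paper proves the almost-sure convergence by hand, adapting Rohlin's argument for disintegration of measures: after reducing to $f\geq 0$, it fixes rationals $\alpha<\beta$, builds nested covers $A_k\supset B_k\supset A_{k+1}$ of the oscillation set $S(\alpha,\beta)$ out of maximal cones where $f^N<\alpha$ (resp.\ $f^N>\beta$), and derives the contradiction $\alpha\, Q^\J(\widetilde{S})>\beta\, Q^\J(\widetilde{S})$. You instead identify the family $(f^N)_N$ as a bounded martingale with respect to the filtration generated by the nested partitions and invoke L\'evy's upward theorem (equivalently, the Lebesgue differentiation theorem along nets). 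Both are classical proofs of the same type of result; yours is more abstract and shorter by citation, the paper's is self-contained and elementary.

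The most useful thing your write-up does is make explicit the consistency identity $\nu_\ell^N=\sum_{\ell'\subset\ell}\nu_{\ell'}^{N+1}$, established exactly as Point (2) of Lemma~\ref{l:propnuellN} at level $N+1$, and to observe that this is what turns $R_f(\bold{S}_\ell^N)=\int f\,d\nu_\ell^N$ into a well-defined finitely additive set function on the algebra of cone-cells. The paper uses this finite additivity implicitly: the displayed line $\int_{A_k}f\,dQ^\J=\sum_{k'}\int_{A_k^{k'}}f\,dQ^\J$ and the subsequent comparison of $\int_{A_k}f\,dQ^\J$ with $\int_{B_k}f\,dQ^\J$ across different levels of the partition only make sense once one has this consistency (note also that $\int_{A}f\,dQ^\J$ is not a literal integral of $f$ against $Q^\J$, since $f$ lives on $S\Sigma$ while $Q^\J$ lives on $\bold{S}_\J$; it is your $R_f(A)$). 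Your observation that this is the ``main technical obstacle'' is accurate. Two small technical remarks: (i) since $R_f$ may be signed, the Carath\'eodory extension should be applied to $R_{f^+}$ and $R_{f^-}$ separately (or to $R_f+\|f\|_\infty Q^\J\geq 0$), each dominated by $\|f\|_\infty Q^\J$, which also gives countable additivity on the algebra; (ii) the identification $f_\ell^N=\mathbb{E}[F\mid\mathcal{F}_N]$ is only claimed where $Q^\J(\bold{S}_\ell^N)\neq 0$, and on the complementary $Q^\J$-null set the convention $f_\ell^N=0$ from \eqref{e:deffjm} is harmless for an a.e.\ statement. With those caveats noted, the argument goes through.
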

We postpone the proof of Proposition \ref{e:convergencemuj} to Section \ref{e:convergencemuj}.

From \eqref{e:nuasasum} and \eqref{e:deffjm}, we infer that for any $N\geq 1$, 
\begin{equation*}
\int_{S\Sigma_\J}fd\nu^\J=\sum_{\ell=0}^{2^N-1}\int_{S\Sigma_\J}fd\nu_\ell^N=\sum_{\ell=0}^{2^N-1}f_\ell^N\nu_\ell^N(S\Sigma_\J),
\end{equation*}
and the dominated convergence theorem together with the definition of $Q^\J$ and Proposition \ref{e:convergencemuj} yield
\begin{equation} \label{e:domconv}
\int_{S\Sigma_\J}fd\nu^\J=\int_{\mathbf{S}_\J}e(f)(s)dQ^\J(s).
\end{equation}

We see that for a fixed $s\in \mathbf{S}_\J$,
\begin{equation*}
C^0(S\Sigma_\J,\R)\ni f\mapsto e(f)(s)\in \R
\end{equation*}
is a non-negative linear functional on $C^0(S\Sigma_\J,\R)$. By the Riesz-Markov theorem, there exists a unique Radon probability measure $\nu^\J_s$ on $S\Sigma_\J$ such that
\begin{equation} \label{e:rieszrep}
e(f)(s)=\int_{S\Sigma_\J}fd\nu^\J_s.
\end{equation}

Putting \eqref{e:domconv} and \eqref{e:rieszrep} together, we get 
\begin{equation*}
\int_{S\Sigma_\J}fd\nu^\J=\int_{\mathbf{S}_\J}\bigg(\int_{S\Sigma_\J}fd\nu^\J_s\bigg)dQ^\J(s)
\end{equation*}
which is the desired disintegration of measures formula.

\subsection{Invariance of the measures $\nu_s^\J$} There remains to show that $\nu^\J_s$ is invariant under the flow generated by $\vec{\rho}^\J_s$. We start with an ``approximate invariance'' lemma.
\begin{lemma} \label{l:approxinvar}
Let $A$ be a $0$-th order pseudodifferential operator microlocally supported in a conic set where $R_j,\Omega_j$ act as first-order pseudodifferential operators for any $j\in\J$. Then there exists $C_A>0$ such that for any $N\in\mathbb{N}^*$, any $0\leq \ell\leq 2^N-1$ and any $s\in \mathbf{S}_\J$ such that the half-line issued from $V$ and defined by the $J$ equations $\frac{2x_j+1}{2x_1+1+\cdots+2x_J+1}=s_j$ (and $x_j\geq -1/2$) lies in $C_{\ell}^N$, there holds
\begin{equation} \label{e:ineqa}
\bigg|\int_{S\Sigma_\J} (\vec{\rho}_{s}^\J a) d\nu_{\ell}^N\bigg|\leq C_Ad(N) \nu_\ell^N(S\Sigma_\J).
\end{equation}
where $a=\sigma_P(A)$.
\end{lemma}
\begin{proof}
For the moment, we assume in addition to the assumptions of the statement that $A$ commutes with $\Omega_1,\ldots,\Omega_J$ and with $\Delta_{j}=X_j^2+Y_j^2$ for any $J+1\leq j\leq m$. The fact that it is sufficient to consider such $A$ will be justified later in the proof. Recall that $R_s$ has been defined in \eqref{e:defRs}.\\
 Using that $[A,R_{s}]$ commutes with $\Omega_1, \ldots, \Omega_J$ in order to kill crossed terms (see Fact \ref{r:ortho}), we have
\begin{align}
&( [A,R_{s}]\varphi_{k,\ell}^N,\varphi_{k,\ell}^N)\smallskip\nonumber\\
&\quad= \bigg( [A,R_{s}]\underset{(n_1,\ldots,n_J)\in C_{\ell}^N}{\sum} \varphi_{k,(n_{1},\ldots,n_J,0,\ldots,0)},\underset{(n_1,\ldots,n_J)\in C_{\ell}^N}{\sum} \varphi_{k,(n_{1},\ldots,n_J,0,\ldots,0)} \bigg) \nonumber \\
&\quad=\underset{(n_1,\ldots,n_J)\in C_{\ell}^N}{\sum} ( [A,R_{s}]\varphi_{k,(n_{1},\ldots,n_J,0,\ldots,0)}, \varphi_{k,(n_{1},\ldots,n_J,0,\ldots,0)}). \label{e:befcomp}
\end{align}
Let us fix $(n_1,\ldots,n_J)\in C_{\ell}^N$.  For simplicity of notations, we set $\varphi=\varphi_{k,(n_{1},\ldots,n_J,0,\ldots,0)}$. We prove that
\begin{equation}
([A,R_{s}]\varphi, \varphi )=\sum_{j=1}^J\Big( s_j-\frac{2n_{j}+1}{\sum_{i=1}^J 2n_{i}+1}\Big) ( [A,R_j] \varphi, \varphi) \label{e:aftcomp}
\end{equation}
We set 
\begin{equation*}
R=\frac{\sum_{j=1}^J (2n_{j}+1)R_j-\sum_{i=J+1}^m \Delta_{i}}{\sum_{j=1}^J 2n_{j}+1},
\end{equation*}
which, up to a constant, is the restriction of $-\Delta$ to the joint eigenspace of the $\Omega_j$ with eigenvalues $2n_j+1$.
 Using that $R$ is selfadjoint (since $R_j$ is selfadjoint for any $j$) and that $\varphi$ is an eigenfunction of $R$, we get
\begin{equation*}
( [A,R]\varphi, \varphi)= (AR\varphi,\varphi) - ( A\varphi, R\varphi) =0
\end{equation*}
and therefore, since $A$ commutes with $\Delta_{J+1},\ldots,\Delta_m$, we get
\begin{equation*}
( [A,R_{s}]\varphi,\varphi)=( [A,R_{s}-R]\varphi,\varphi)=\sum_{j=1}^J\Big( s_j-\frac{2n_{j}+1}{\sum_{i=1}^J 2n_{i}+1}\Big) ( [A,R_j] \varphi,\varphi)
\end{equation*} 
which is exactly \eqref{e:aftcomp}.

Thanks to our choice of microlocal support for $A$, we know that $[A,R_j]\in \Psi^0(\M)$ for $1\leq j\leq J$, and thus is it bounded in $L^2(\M)$. Combining \eqref{e:befcomp} and \eqref{e:aftcomp}, we obtain the existence of a constant $C_A>0$ depending on $A$ such that
\begin{equation} \label{e:ineqsymbol}
\begin{aligned}
\big| ( [A,R_{s}]\varphi_{k,\ell}^N,\varphi_{k,\ell}^N) \big|& \leq C_A\underset{(n_1,\ldots,n_J)\in C_{\ell}^N}{\sum} \sum_{j=1}^J\Big| s_j-\frac{2n_{j}+1}{\sum_{i=1}^J 2n_{i}+1}\Big|\, \lVert\varphi_{k,(n_{1},\ldots,n_J,0,\ldots,0)} \rVert_{L^2}^{2} \\
&\leq C_Ad(N) \lVert\varphi_{k,\ell}^N\rVert_{L^2}^{2}
\end{aligned}
\end{equation}
where in the last line, we used \eqref{e:diffs} and the fact that distinct joint eigenspaces of the $\Omega_j$ are orthogonal.

In order to pass to the limit $k\rightarrow +\infty$ in these last inequalities, we use the following lemma.
\begin{lemma}\label{l:principalsymbol}
On $\Sigma_{\mathcal{J}}$, there holds
\begin{equation*}
\sigma_{P}([A,R_{s}])_{|\Sigma_{\mathcal{J}}}= i\vec{\rho}_{s}^\J a
\end{equation*}
where $a=\sigma_P(A)$.
\end{lemma}
\begin{proof}[Proof of Lemma \ref{l:principalsymbol}]
Denoting by $\{\cdot,\cdot\}$ the Poisson bracket on the symplectic manifold $T^*\M$, we have for $(q,p)\in\Sigma_\J$
\begin{align*}
\sigma_{P}([A,R_{s}])(q,p)=\frac1i\{a,\sum_{j\in\J}\text{sgn}(p_{z_j})s_jh_{\partial_{z_j}}\}(q,p)&=i\sum_{j\in\J}\text{sgn}(p_{z_j})s_j(\vec{h}_{\partial_{z_j}}a)(q,p)\\&=i(\vec{\rho}_s^\J a)(q,p)
\end{align*}
where in the last equality we used \eqref{e:vecrhoJs}. 
\end{proof}
 Since all the mass of $\nu_\ell^N$ is contained in $S\Sigma_\J$ by Lemma \ref{l:propnuellN}, we finally deduce the upper bound \eqref{e:ineqa} from \eqref{e:ineqsymbol} and Lemma \ref{l:principalsymbol}.

We have established this upper bound only for an operator $A$ of order $0$ \emph{which commutes with $\Omega_1,\ldots,\Omega_J$ and $\Delta_j$ for any $J+1\leq j\leq m$}. We would now like to remove this commutation assumption. 

Let $b\in\mathscr{S}^0(\M)$ be an arbitrary $0$-th order symbol supported in a subset of $T^*\mathbf{H}^{\mathcal{J}}$ where $R_j,\Omega_j$ act as first-order pseudodifferential operators for any $j\in\J$. Let also $(q,p)$ denote the coordinates in $T^*\M$ and $(q_j,p_j)$ the coordinates in the cotangent bundle of the $j$-th copy of $\mathbf{H}$.

We notice that $S\Sigma_\J$ is invariant under translation in $q_i$ for $i\notin \J$. Hence, considering the averaged symbol 
\begin{align*}
&\bar{b}(q_1,\ldots,q_m,p_1,\ldots,p_m)\nonumber\\
&\qquad =\frac{1}{|\text{Vol}(\mathbf{H}^{m-J})|}\int_{\mathbf{H}^{m-J}} b(q_1,\ldots,q_m,p_1,\ldots,p_J,0,\ldots,0)\; dq_{J+1}\ldots dq_m, \label{e:averagebb}
\end{align*}
  we have the following properties:
\begin{enumerate}[(i)]
\item $\bar{b}\in\mathscr{S}^0(\M)$ does not depend on $q_i,p_i$ for $J+1\leq i\leq m$;
\item for any vector field $X$ on $S\Sigma_\J$ depending only on the coordinates $q_1$, $\ldots$, $q_J$, $p_1$, $\ldots$, $p_J$, there holds
\begin{equation}\label{e:Xbarb}
\int_{S\Sigma_\J} (X\bar{b})d\nu_\ell^N=\int_{S\Sigma_\J} (Xb)d\nu_\ell^N.
\end{equation}
\end{enumerate}
We denote by $\Op^{\text{st}}$ the standard quantization (see Appendix \ref{a:pseudo}). We set
\begin{equation}\label{e:Aaveraged}
 A=\int_{(\R/2\pi\Z)^{J}}U(-t)\Op^{\text{st}}(\bar{b})U(t)dt \ \in \Psi^0(\M)
\end{equation}
where $U(t)=U(t_1,\ldots,t_J)=e^{i(t_{1}\Omega_{1}+\cdots+t_J\Omega_J)}$ for $t=(t_{1},\ldots,t_J)\in(\R/2\pi\Z)^{J}$. 
\begin{itemize}
\item $A$ commutes with $\Omega_j$ for any $1\leq j\leq J$. This follows from an argument that we have already described in the proof of Point (2) of Lemma \ref{l:propnuellN}, applied to  the formula \eqref{e:Aaveraged}.
\item $A$ also commutes with $\Delta_j$ for any $J+1\leq j\leq m$ thanks to (i), combined with the fact that the standard quantization preserves the product structure of the manifold $\M$ (see after \eqref{e:defstandardquantization}).
\end{itemize}

The principal symbol of $A$ on $S\Sigma_\J$ coincides with $\bar{b}$, due to \eqref{e:EgorovforA} and the fact that $\theta_j$ is stationary on $\Sigma_\J$ for $j\in\J$. Using \eqref{e:ineqa} for $A$, this proves that
\begin{equation*}
\bigg|\int_{S\Sigma_\J} \vec{\rho}_{s}^\J\bar{b}\ d\nu_{\ell}^N\bigg|\leq C_Ad(N) \nu_\ell^N(S\Sigma_\J).
\end{equation*}
But thanks to \eqref{e:Xbarb} applied with $X=\vec{\rho}_s^\J$ (recall that by formula \eqref{e:vecrhoJs}, $\vec{\rho}_s^\J$ does not depend on $q_i$ for $J+1\leq i\leq m$) there holds
\begin{equation*}
\int_{S\Sigma_\J} (\vec{\rho}_{s}^\J\bar{b}) d\nu_{\ell}^N=\int_{S\Sigma_\J} (\vec{\rho}_{s}^\J b) d\nu_{\ell}^N
\end{equation*}
hence the conclusion.
\end{proof}

We finally show how to deduce from Lemma \ref{l:approxinvar}  that $\nu^\J_s$ is invariant under the flow $e^{t\vec\rho_s^\J}$. Let $A\in \Psi^0(\M)$ be microlocally supported in a cone of $T^*\M$ whose intersection with some conic neighborhood of the set $\{p_{z_j}=0\}$ is reduced to $0$, for any $j\in\J$. We set $a=\sigma_P(A)$. For $Q^\J$-almost every $s\in \mathbf{S}_\J$, we have
\begin{align}
\bigg|\int_{S\Sigma_\J}\left(\vec{\rho}_s^\J a\right)d\nu^\J_s\bigg|&=\big|e(\vec{\rho}_s^\J a)(s)\big| \qquad \text{(by \eqref{e:rieszrep})} \nonumber \\
&=\bigg|\lim_{N\rightarrow +\infty} \frac{1}{\nu_{\ell(s,N)}^N(S\Sigma_\J)}\int_{S\Sigma_\J}(\vec{\rho}_s^\J a) d\nu_{\ell(s,N)}^N \label{e:dennull}\bigg| \\
&\leq \lim_{N\rightarrow +\infty} C_Ad(N) \qquad \text{(by \eqref{e:ineqa})}\nonumber \\
&=0 \nonumber
\end{align}
with the convention that if the denominator in \eqref{e:dennull} is null, then the whole expression is null. Then we conclude the proof by applying the following fact to $\vec{\rho}_s^\J$ which is a vector field on $S\Sigma_\J$:\\
\textbf{Fact.} Let $W$ be a manifold, equipped with a measure $\delta$, and let $T$ be a complete vector field on $W$. If $\int_W (T\phi) d\delta=0$ for every $\phi\in C_c^\infty(W,\R)$, then the measure $\delta$ is invariant under the flow of $T$. This is proved by considering the derivative $\frac{d}{dt}\int_W \phi(e^{tT}w)d\delta(w)$ at $t=0$.

\subsection{Proof of Proposition \ref{e:convergencemuj}}
In this section, we finally prove Proposition \ref{e:convergencemuj}. By linearity of formula \eqref{e:deffjm}, it is sufficient to prove the statement for $f\geq 0$. Therefore, in the sequel, we fix $f\geq 0$. For $N\geq 1$, we define the function $f^N:\mathbf{S}_\J\rightarrow\R$ by $f^N(s)=f_{\ell(s,N)}^N$, where $\ell(s,N)$ is the approximation at order $N$ of $s$. Note that $f^N$ is constant on $\mathbf{S}_\ell^N$ for $0\leq \ell\leq 2^N-1$.

For $0\leq \alpha < \beta\leq 1$, we define $S(\alpha,\beta)$ as the set of $s\in \mathbf{S}_\J$ such that
\begin{equation*}
\liminf_{N\rightarrow +\infty} f^N(s) <\alpha <\beta <\limsup_{N\rightarrow +\infty} f^N(s).
\end{equation*}
To prove Proposition \ref{e:convergencemuj}, it is sufficient to prove that $S(\alpha,\beta)$ has $Q^\J$-measure $0$ for any $0\leq \alpha<\beta\leq 1$. Fix such $\alpha,\beta$. For $s\in S(\alpha,\beta)$, take a sequence $1\leq N_1^\alpha(s)<N_1^\beta(s)<N_2^\alpha(s)<N_2^\beta(s)<\cdots<N_k^\alpha(s)<N_k^\beta(s)<\cdots$ of integers such that $f^{N_k^\alpha(s)}(s)<\alpha$ and $f^{N_k^\beta(s)}(s)>\beta$ for any $k\geq 1$. We finally define the following sets:
\begin{equation*}
A_k=\bigcup_{s\in S(\alpha,\beta)} \mathbf{S}_{\ell(s,N_k^\alpha(s))}^{N_k^\alpha(s)}
\end{equation*}
\begin{equation*}
B_k=\bigcup_{s\in S(\alpha,\beta)}  \mathbf{S}_{\ell(s,N_k^\beta(s))}^{N_k^\beta(s)}
\end{equation*}
We have $S(\alpha,\beta)\subset A_{k+1}\subset B_k\subset A_k$ for every $k\geq 1$. In particular, 
\begin{equation} \label{e:inclS}
S(\alpha,\beta)\subset\widetilde{S}(\alpha,\beta):=\bigcap_{k\in\N^*} A_k = \bigcap_{k\in\N^*} B_k.
\end{equation}

Given any two of the sets $\mathbf{S}_{\ell(s,N_k^\alpha(s))}^{N_k^\alpha(s)}$ that form $A_k$, either they are disjoint or one is contained in the other. Consequently, $A_k$ may be written as a disjoint union of such sets, denoted by $A_k^{k'}$. This union is countable, since the number of sets $\mathbf{S}^N_\ell$ ($N\in\N^*, 0\leq \ell\leq 2^N-1$) is countable. Therefore, 
\begin{equation*}
\int_{A_k}fdQ^\J =\sum_{k'}\int_{A_k^{k'}} fdQ^\J<\sum_{k'}\alpha Q^\J(A_k^{k'})=\alpha Q^\J(A_k)
\end{equation*}
and analogously, with similar notations, 
\begin{equation*}
\int_{B_k}fdQ^\J =\sum_{k'}\int_{B_k^{k'}} fdQ^\J>\sum_{k'}\beta Q^\J(B_k^{k'})=\beta Q^\J(B_k).
\end{equation*}
Since $B_k \subset A_k$, we get $\alpha Q^\J(A_k)>\beta Q^\J(B_k)$. Taking the limit $k\rightarrow +\infty$, it yields 
\[ \alpha Q^\J(\widetilde{S}(\alpha,\beta))>\beta Q^\J(\widetilde{S}(\alpha,\beta)), \] 
which is possible only if $Q^\J(\widetilde{S}(\alpha,\beta))=0$. Therefore, using \eqref{e:inclS}, we get $Q^\J(S(\alpha,\beta))=0$, which concludes the proof of the proposition.

\section{Proof of Theorem \ref{t:conversereplacement}} \label{s:converse}

\subsection{Preliminary steps}

In this subsection, we introduce the tools used in the proof of Theorem \ref{t:conversereplacement}.

\subsubsection{Spectral decomposition}\label{s:spectralspectral}

We first introduce a spectral decomposition of $-\Delta$. Fix $j\in \{1,\ldots,m\}$, consider the $j$-th copy of $L^2(\mathbf{H})$ in $L^2(\M)\cong L^2(\mathbf{H})^{\otimes m}$, and take the Fourier decomposition with respect to $z_j$ in this copy:
\begin{equation*} 
L^2(\mathbf{H})=L^2_{0}\oplus\underset{n \in \mathbb{N}, \alpha\in\Z\setminus \{0\}}{\bigoplus} E_{n,\alpha}
\end{equation*}
where $\partial_{z_j}$ acts as $0$ on $L^2_{0}$, and on $E_{n,\alpha}$, $\frac1i\partial_{z_j}$ acts as $\alpha$ and $\Omega_j$ as $2n+1$. \\
Recall that $\mathcal{P}$ stands for the set of all subsets of $\{1,\ldots,m\}$. We fix $\mathcal{J}\in\mathcal{P}$. For $(n_j)\in\N^{\mathcal{J}}$, $(\alpha_j)\in(\Z\setminus\{0\})^\J$ we set
\begin{equation*}
\mathcal{H}^{\mathcal{J}}_{(n_j),(\alpha_j)}=F^1\otimes \cdots \otimes F^m\subset L^2(\M)
\end{equation*}
where $F^j=E_{n_j,\alpha_j}$ for $j\in\mathcal{J}$ and $F^j=L^2_{0}$ for $j\notin\J$.

We have the orthogonal decomposition 
\begin{equation} \label{e:decompoL2}
L^{2}(\M)=\bigoplus_{\J\in\mathcal{P}}\underset{\substack{(n_j) \in \mathbb{N}^{\mathcal{J}}\\(\alpha_j) \in (\Z\setminus\{0\})^{\mathcal{J}}}}{\bigoplus}\mathcal{H}_{(n_j),(\alpha_j)}^{\mathcal{J}}.
\end{equation}
We can also write the associated decomposition of $-\Delta$ as 
\[ 
-\Delta =\bigoplus_{\J\in\mathcal{P}}\underset{\substack{(n_j) \in \mathbb{N}^{\mathcal{J}}\\(\alpha_j) \in (\Z\setminus\{0\})^{\mathcal{J}}}}{\bigoplus} H_{(n_j),(\alpha_j)}^{\mathcal{J}} \]
with
\[ H_{(n_j),(\alpha_j)}^{\mathcal{J}}  =\sum_{j\in \mathcal{J}}\big( 2n_{j}+1\big)|\alpha_j|-\sum_{i \notin \mathcal{J}} (\partial_{x_i}^2+\partial_{y_i}^2). \]
From this, we deduce
\begin{align*}
&\text{spec}(-\Delta)\nonumber\\&=\bigg\{ \sum_{j\in \mathcal{J}} \big(2n_j+1\big)|\alpha_j|+2\pi\sum_{i\notin \mathcal{J}} (k_i^2+\ell_i^2), \text{  with } k_i,\ell_i\in \Z, \ \mathcal{J}\in\mathcal{P}, \ n_j\in\N, \ \alpha_j\in \Z\setminus\{0\}\bigg\} 
\end{align*}
where $\text{spec}$ denotes the spectrum.

\subsubsection{Notations.} \label{s:notationsthconverse}

In this section, we define two sets $\myB^\J$ and $\myC^\J$ of Radon probability measures, contained in $\D^\J$, and which can be seen as ``elementary building blocks'' for proving Theorem \ref{t:conversereplacement}. For this we introduce a few more notations, in addition to those introduced in Section \ref{s:mainresultssection}.

Fix $\J\in\mathcal{P}\setminus\{\varnothing\}$. We first define an equivalence relation $\overset{\J}{\sim}$ on points in $\mathbf{H}^\J$: two points $q,q'$ are in relation if they can be obtained from each other by following the flows of $\partial_{z_j}$, $j\in\J$, i.e., if there exists $(s_j)\in\R^\J$ such that
\begin{equation*}
q'=\exp\bigg(\sum_{j\in\J} s_{j}\partial_{z_j}\bigg)q.
\end{equation*}
Fix $q\in\mathbf{H}^\J$. From the group law \eqref{e:grouplaw}, we see that the equivalence class of $q$ is an embedded submanifold of $\mathbf{H}^\J$, homeomorphic to the torus $(\R/2\pi\Z)^\J$. There is a probability measure on $\mathbf{H}^\J$ which is a uniform Dirac delta measure supported on this torus. Tensorizing this measure with the Lebesgue measure in $\mathbf{H}^{\notin\J}$ we obtain a Radon probability measure on $\M$, which we denote by $\alpha_q$. Its support is denoted by $M_q^\J\subset \M$.

We now lift $\alpha_q$ to $S^*M$: we define a probability measure on $S^*\M$ supported on $S\Sigma_\J$ and whose pushforward under the canonical projection from $T^*\M$ to $\M$ is exactly $\alpha_q$. For that, we notice that if $(q,p)\in S\Sigma_\J$, for any $q'\in\M$ it makes sense to consider the point $(q',p)\in S\Sigma_\J$, which is the point in the fiber of $S\Sigma$ over $q$ that has the same homogeneous coordinates $[p_{z_1}:\cdots:p_{z_m}]$ as $p$.
This allows us to define 
\begin{equation*}
\mathscr{H}_{q,p}^\J=\alpha_q\otimes \delta_p
\end{equation*}
where $\delta_p$ denotes the Dirac mass on $p$.

In addition to $\D^\J$, which has been introduced in \eqref{s:mainresultssection}, we introduce two other sets: 
\begin{itemize}
\item The set
\begin{equation}\label{e:defbj}
\myB^\J=\big\{\mathscr{H}_{q,p}^\J \mid  (q,p) \in S\Sigma_\J \big\}
\end{equation}
which is a subset of $\D^\J$. This follows from the definition of $\D^\J$ and the fact that $\alpha_q$ is invariant under $\partial_{z_j}$ for any $j\in\J$.
\item The set of convex combinations of measures in $\myB^\J$, which consequently is also a subset of $\D^\J$:
\begin{equation*}
\myC^\J=\bigg\{\sum_{i\in\mathcal{F}} \beta_i\nu_i \mid \mathcal{F} \text{ is a finite set}, \sum_{i\in\mathcal{F}} \beta_i=1, \forall i\ \beta_i\geq 0, \nu_i\in\myB^\J\bigg\}.
\end{equation*}
\end{itemize}

\subsection{Core of the proof}
In this section, we provide a fully detailed proof of Theorem \ref{t:conversereplacement}.  Proofs with relatively similar ideas can be found in \cite{jakobson1996classical}, \cite{Mac08}, \cite{studnia2019}, \cite{arnaiz2022}. The proof uses two main ingredients:
\begin{itemize}
\item On the ``classical side,''  the knowledge of the flows of the vector fields $\vec\rho_s^\J$ given by \eqref{e:vecrhoJs} (see also Remark \ref{r:flowrho}).
\item On the ``quantum side,'' the specific algebraic structure of $\text{spec}(-\Delta)$ (see Section \ref{s:spectralspectral}). 
\end{itemize}

\subsubsection{Step 1: Homogeneity}\label{s:step0}
In this preliminary step, we describe the homogeneity properties of $\M$.

The manifold $\mathbf{H}$ has a Lie group structure recalled in Section \ref{s:productsoftheheisenberggroup}, and thus $\M$ also has a Lie group structure obtained by product, whose composition law is denoted by $\star_m$. The left-translation by $g$ is denoted by $\tau_g$: $\tau_gq=g\star_m q$. The vector fields $X_j$ and $Y_j$ are left-invariant for $\star_m$, and thus $\Delta$ is also left-invariant. This implies that the left-translation by $g\in\M$ of an eigenfunction $\varphi$ of $\Delta$ is also an eigenfunction with same eigenvalue, denoted by $\tau_g\varphi=\varphi\circ \tau_g^{-1}$.

\begin{lemma}\label{l:step0}
The QLs of a sequence of $L^2(\M)$-normalized eigenfunctions $(\tau_g\varphi_k)_{k\in\N^*}$ are the left-translates by $g$ of the QLs of the sequence $(\varphi_k)_{k\in\N^*}$.
\end{lemma}
\begin{proof}
The left-translation $\tau_g$ induces an action on $\Psi^0(\M)$. The image of $A\in\Psi^0(\M)$ under left-translation by $g$ is denoted by $\widetilde{\tau}_g A\in \Psi^0(\M)$: it is defined as $(\widetilde{\tau}_gA)(\varphi)(q)=A(\tau_g^{-1}\varphi)(\tau_g^{-1}q)$. We have
\begin{equation}\label{e:actionope}
((\widetilde{\tau}_gA)(\tau_g\varphi),\tau_g\varphi)_{L^2(\M,\mu)}=(A\varphi,\varphi)_{L^2(\M,\mu)}
\end{equation}
 using that the Haar measure $\mu$ is left-invariant. 
 
The left-translation $\tau_g$ also induces an action $\overline{\tau}_g$ on the cotangent bundle $T^*\M$. We have 
\begin{equation}\label{e:actioncota}
\sigma_P(\widetilde{\tau}_gA)=\sigma_P(A)\circ \overline{\tau}_g^{-1}.
\end{equation}
Combining \eqref{e:actionope}, \eqref{e:actioncota} and the definition of QLs, we get the result.
\end{proof}

\subsubsection{Step 2}
Our goal in this subsection is to prove:
\begin{lemma} \label{l:buildblock1} 
Any $\nu^\J\in\D^\J$ is a QL, associated to an $L^2(\M)$-normalized sequence of eigenfunctions which are also eigenfunctions of $\Omega_j$ for $j\in\J$ (with eigenvalue $1$).
\end{lemma} 
We loosely follow the scheme of proof (and some proofs) of \cite{studnia2019}. 

According to the Calder\'{o}n--Vaillancourt theorem, there exists $K\in\N$ such that for any $a\in\mathscr{S}^0(\M)$, there holds
\begin{equation}\label{e:cvai}
(\Op(a)u,u)\leq C\lVert a\rVert_{C^K(S^*\M)}\lVert u\rVert_{L^2}^2.
\end{equation}
To avoid any confusion, we call \emph{weak-$*_K$ topology}  the weak-* topology associated to testing against elements of $C^K(S^*\M)$, and keep the terminology weak-* topology for the one associated to testing against elements of $C^0(S^*\M)$.

We prove two facts, which combined together imply Lemma \ref{l:buildblock1}.
\begin{fact} \label{f:density}
$\myC^\J$ is dense in the weak-$*_K$ topology in $\D^\J$.
\end{fact}
\begin{fact}\label{l:simplebuildblock23}
Any $\nu\in\myC^\J$ is a QL, associated to an $L^2(\M)$-normalized sequence of eigenfunctions which are also eigenfunctions of $\Omega_j$ for $j\in\J$ (with eigenvalue $1$).
\end{fact}
To explain how to deduce Lemma \ref{l:buildblock1} from these two facts, we first need to metrize the weak-$*_K$ topology (a direct diagonal extraction argument using Facts \ref{f:density} and \ref{l:simplebuildblock23} is not sufficient to prove Lemma \ref{l:buildblock1}).

We denote by $\mathcal{X}$ the unit ball of the topological dual of $C^K(S^*\M)$.  When equipped with the weak-$*_K$ topology, $\mathcal{X}$ is denoted by $\mathcal{X}_{*_K}$.
We construct a metric $\delta$ on $\mathcal{X}$ defining the same topology as the weak-$*_K$ convergence.  We pick a countable sequence $(a_r)_{r\in\N}$ which is dense in $C^K(S^*\M)$ (this space is separable), and we define $\delta:\mathcal{X}\times \mathcal{X}\rightarrow \R$ by
\begin{equation*}
\delta(\ell,\ell')=\sum_{r\in\N}\min\big((\ell-\ell')(a_r),2^{-r}\big).
\end{equation*}
This is a metric on $\mathcal{X}$, and $\text{Id}:\mathcal{X}_{*_K}\rightarrow (\mathcal{X},\delta)$ is a continuous bijection, hence an homeomorphism (since $\mathcal{X}_{*_K}$ is compact by the Banach-Alaoglu theorem). We conclude that
\begin{equation}\label{e:metrictopology}
\text{the topology induced by $\delta$ coincides with the weak-$*_K$ topology on $\mathcal{X}$.}
\end{equation}
\begin{proof}[Proof of Lemma \ref{l:buildblock1}] We notice that both $\myC^\J$ and $\D^\J$ are included in $\mathcal{X}$. Let $\nu^\J\in\D^\J$. For $r\in \N$ let $\nu_r\in \myC^\J$ with $\nu_r\rightarrow \nu^\J$ in the weak-$*_K$ topology as $r\rightarrow +\infty$. Such a sequence exists thanks to Fact \ref{f:density}. 

For any $r\in \N$, let $\varphi_{r,n}$ be an $L^2$-normalized eigenfunction of $\Delta$ which is also an eigenfunction of $\Omega_j$ for $j\in\J$ with eigenvalue $1$, and such that $\nu_{r,n}\in \mathcal{X}$ defined by
\begin{equation*}
\forall a\in C^K(S^*\M), \qquad (\Op(a)\varphi_{r,n},\varphi_{r,n})=\nu_{r,n}(a)
\end{equation*}
verifies 
\begin{equation}\label{e:mech}
\forall a\in\mathscr{S}^0(\M), \qquad \nu_{r,n}(a)\underset{n\rightarrow +\infty}{\longrightarrow} \int_{S^*M}ad\nu_r.
\end{equation}
Such a sequence exists thanks to Fact \ref{l:simplebuildblock23}. And due to \eqref{e:cvai} the convergence \eqref{e:mech} can be extended to any $a\in C^K(S^*\M)$. Fix $\varepsilon>0$. Pick $r\in\N$ sufficiently large such that $\delta(\nu_r,\nu^\J)\leq \varepsilon$. For $n\in\N$ sufficiently large, $\delta(\nu_{r,n},\nu_r)\leq \varepsilon$. Thus $\delta(\nu_{r,n},\nu^\J)\leq 2\varepsilon$. Taking $\varepsilon\rightarrow 0$, we obtain a sequence of eigenfunctions $(\psi_k)_{k\in\N}$ of the form $\psi_k=\varphi_{r_k,n_k}$ such that
\begin{equation*}
(\Op(a)\psi_k,\psi_k)=\int_{S^*M}ad\nu_{r_k,n_k}=\int_{S^*M} ad\nu^\J+o(1)
\end{equation*}
for any $a\in \mathscr{S}^0(\M)\cap C^K(S^*\M)=\mathscr{S}^0(\M)$. It follows that $\nu^\J$ is a QL.
\end{proof}

\subsubsection{Proof of Fact \ref{f:density}}

For this proof of density, we argue in two steps, see \eqref{e:d1} and \eqref{e:d2} below. The reason why we cannot argue directly with the  Krein--Milman theorem is that $S\Sigma_\J$ being not closed, the set $\myC^\J$ is not compact. 

Although not closed, the set $S\Sigma_\J$ can be written as an increasing and countable union of compact sets $A_r^\J$, $r\in\N$, namely
\begin{equation*}
A_r^\J=\bigg\{(q,p)\in S\Sigma_\J ~\bigg|~ \frac{|p_{z_j}|}{\underset{k\in\J}{\sum} |p_{z_k}|}\geq 1/r \text{ for any } j\in\J\bigg\}.
\end{equation*}
For any $r\in\N$, we denote by $\D_{r}^\J$ the subset of $\D^\J$ (introduced in \eqref{e:defDJ}) containing the Radon probability measures which are supported in $A_r^\J$.

\begin{lemma}\label{l:convcomp}
The set $\mathscr{D}_r^\J$ is convex, and compact for the weak-$*_K$ topology.
\end{lemma}
\begin{proof}
The invariance property involved in the definition of $\D^\J$ (and hence of $\D_r^\J$) can be equivalently stated with a set of  equations involving only continuous functions on $S^*\M$, and moreover  $A_r^\J$ is closed, therefore $\D_r^\J$ is closed for the weak-* topology. Since $\mathscr{D}_r^\J$ contains only Radon measures and using that $C^K(S^*\M)$ is dense in $C^0(S^*\M)$, we deduce that $\mathscr{D}_r^\J$ is also closed for the weak-$*_K$ topology. Since $\mathcal{X}_{*_K}$ is compact, it follows that $\D_r^\J\subset \mathcal{X}$ is compact for the weak-$*_K$ topology.  

Finally, $\D_{r}^\J$ is convex due to the inclusion 
\begin{equation*}
\text{supp}(t\mu_1+(1-t)\mu_2)\subset \text{supp}(\mu_1)\cup \text{supp}(\mu_2)
\end{equation*}
valid for any measures $\mu_1,\mu_2$. In order to apply the Krein--Milman theorem, we prove the following lemma.
\end{proof}

\begin{lemma}\label{l:extremal}
Any extremal point of $\D_{r}^\J$ is in $\myB^\J$.
\end{lemma}
\begin{proof} Assume for the sake of a contradiction that $\theta$ is an extremal point of $\D_{r}^\J$ such that $\theta\notin\myB^\J$. According to \eqref{e:defbj}, the fact that $\theta$ is not in $\myB^\J$ gives information either on its support ``on the base'' or ``in the fibers''. We explore these two possibilities successively, and we seek for a contradiction in both cases.

\subparagraph{Case 1.} First, assume that there exist two points $x',y'\in S\Sigma_\J$ in the support of $\theta$ such that the projections $x$ and $y$ of $x'$ and $y'$ on $\mathbf{H}^\J$ are not in the same equivalence class for $\overset{\J}{\sim}$. Our goal is to write $\theta$ under the form
\begin{equation}\label{e:condontheta}
\theta=t\theta_1+(1-t)\theta_2, \qquad \text{with } 0<t<1, \quad \theta_1,\theta_2\in \D_{r}^\J, \quad \theta_1\neq \theta_2.
\end{equation}
By closedness of the equivalence classes $M_{\bullet}^\J$ (see definition in Section \ref{s:notationsthconverse}), there exists a small open set $U$ around $x$ such that any point in $\overline{U}$ does not belong to $M_{y}^\J$. Let $V$ be the union of all sets $M_{\bullet}^\J$ intersecting $U$, and $W =\mathbf{H}^\J\setminus V$. Denote by $V'\subset S^*\M$ (resp. $W'\subset S^*\M$) the set of points in $A_r^\J$ whose projection to $\mathbf{H}^\J$ belongs to $V$ (resp. $W$). Then \eqref{e:condontheta} holds with $\theta_1=\frac{\theta(\bullet\cap V')}{\theta(V')}$, $\theta_2=\frac{\theta(\bullet\cap W')}{\theta(W')}$ and $t=\theta(V')$. Thus  $\theta$ is not extremal, which is a contradiction.

\subparagraph{Case 2.} Assume now that there exist two points $(q,p),(q',p')\in S\Sigma_\J$ in the support of $\theta$ such that the homogeneous coordinates $[p_{z_1}:\cdots:p_{z_m}]$ and $[p_{z_1}':\cdots:p_{z_m}']$ are not equal. As in the previous case, taking $V'$ as the set of points $(q'',p'')$ in $S\Sigma_\J$ whose homogeneous coordinates $[p_{z_1}'':\cdots:p_{z_m}'']$ are close to $[p_{z_1}:\cdots:p_{z_m}]$ and $W'$ as the complementary set, we see that \eqref{e:condontheta} holds again with $\theta_1=\frac{\theta(\bullet\cap V')}{\theta(V')}$, $\theta_2=\frac{\theta(\bullet\cap W')}{\theta(W')}$ and $t=\theta(V')$. Thus  $\theta$ is not extremal, which is a contradiction. 

\subparagraph{Conclusion.} By case 1 we conclude that the pushforward $\pi_*\theta$ of $\theta$ through the canonical projection $\pi:S^*\M\rightarrow \M$ is the tensorial product of the Lebesgue measure in the components $\notin\J$ and of a measure supported on a single equivalence class $M_q^\J$. Thanks to the invariance of $\theta$ under the flows of $\partial_{z_j}$ for $j\in\J$, this last measure is the uniform Dirac delta measure (see Section \ref{s:notationsthconverse}) on $M_q^\J$. Using case 2 we see that there exist homogeneous coordinates $[p_{z_1}:\cdots:p_{z_m}]$ such that in the fibers of $S^*\M$, $\theta$ has mass only on points having these homogeneous coordinates. Thanks to the  invariance of $\theta$ under the flows of $\partial_{z_j}$ for $j\in\J$ we obtain that $\theta$ is of the form $\alpha_q\otimes \delta_p$, i.e., $\theta\in\myB^\J$.
\end{proof}

Thanks to the Krein--Milman theorem applied in the locally convex topological vector space consisting of all measures on $S^*\M$ endowed with the weak-$*_K$ convergence topology, it follows from Lemmas \ref{l:convcomp} and \ref{l:extremal} that
\begin{equation}\label{e:d1}
\text{$\myC^\J\cap \D_{r}^\J$ is dense in the weak-$*_K$ topology in $\D_{r}^\J$}.
\end{equation}
Now, we justify that
\begin{equation}\label{e:d2}
\text{any $\nu^\J\in\D^\J$ is the weak-$*_K$ limit of a sequence $\nu_r^\J\in\D_r^\J$ as $r\rightarrow +\infty$.}
\end{equation}
Let $\chi\in C^\infty(\R;[0,1])$ such that $\chi(x)=0$ for $x\leq 1$ and $\chi(x)=1$ for $x\geq2$. Let $\nu^\J\in\D^\J$. We set for $r\in\N^*$
\begin{equation*}
\nu_r^\J=c_r^\J\nu^\J\prod_{j\in\J}\chi\Big(\frac{|{p_{z_j}}|}{r}\Big)\in\D_r^\J.
\end{equation*}
Here $c_r^\J$ is a normalizing constant. Since $\mathbf{1}_{S\Sigma^\J}\nu^\J=\nu^\J$, it follows that $c_r^\J$ tends to $1$ as $r\rightarrow 0$.
By the dominated convergence theorem, we get that $\nu_r^\J$ converges in the \mbox{weak-*} topology towards $\nu^\J$, which proves \eqref{e:d2} since $C^K(S^*\M)\subset C^0(S^*\M)$.
Combining \eqref{e:metrictopology}, \eqref{e:d1} and \eqref{e:d2}, this concludes the proof of Fact \ref{f:density}.

\subsubsection{Proof of Fact \ref{l:simplebuildblock23}}

To prove Fact \ref{l:simplebuildblock23}, we start with a preliminary statement, concerning $\myB^\J$.
\begin{fact}\label{l:simplebuildblock1}
Any $\nu\in\myB^\J$ is a QL, associated to an $L^2(\M)$-normalized sequence of eigenfunctions which are also eigenfunctions of $R_j$ and $\Omega_j$ for $j\in\J$.
\end{fact}
\begin{proof}[Proof of Fact \ref{l:simplebuildblock1}]
Let $\nu=\mathscr{H}_{q,p}^\J\in\myB^\J$. In particular, $(q,p)\in S\Sigma_\J$. Thanks to Lemma \ref{l:step0}, we assume in the sequel that $q=0$. Without loss of generality, we assume furthermore that $\J=\{1,\ldots,J\}$ for some $1\leq J\leq m$.

We construct a sequence of eigenfunctions $(\varphi_k)_{k\in\N^*}$ of $-\Delta$ which admits $\mathscr{H}_{0,p}^\J$ as unique QL. In our construction, for any $k\in\N^*$, $\varphi_k$ belongs to the eigenspace $\mathcal{H}^\J_{(0),(\alpha_{j,k})}$ for some $(\alpha_{j,k})\in(\N^*)^\J$, and it does not depend on the variables in the $i$-th copy of $\mathbf{H}$ for $i\notin \J$. Note also that the $(0)$ appearing in $\mathcal{H}^\J_{(0),(\alpha_{j,k})}$ means that all $\varphi_k$ are eigenfunctions of $\Omega_j$ ($j\in\J$) with eigenvalue $2\times0+1=1$. Our goal is to choose adequately the $J$-tuples $(\alpha_{j,k})_{j\in\J}$. A similar argument for $m=1$ is done in the proof of Point 2 of Proposition 3.2 in \cite{de2018spectral}.

We fix a sequence of $J$-tuples $(\alpha_{1,k},\ldots,\alpha_{J,k})\in (\Z\setminus\{0\})^J$, for $k\in\N^*$, such that:
\begin{itemize}
\item For any $1\leq j\leq J$, $\alpha_{j,k}\rightarrow +\infty$ as $k\rightarrow +\infty$.
\item For any $1\leq j,j'\leq J$, 
\begin{equation} \label{convalphaversp}
\frac{\alpha_{j,k}}{\alpha_{j',k}}\underset{k\rightarrow+\infty}{\longrightarrow} \frac{p_{z_j}}{p_{z_{j'}}},
\end{equation}
where $[p_{z_1}:\cdots:p_{z_m}]$ are the homogeneous coordinates of $p$ in $S\Sigma$ (see Section \ref{s:flowsandprobabilities}).
\end{itemize}

Now, for any $k\in\N^*$, denoting by $\mathbf{1}$ the constant function equal to $1$ (on some copy of $\mathbf{H}$), we define 
\begin{equation} \label{e:defphik}
\varphi_k=\Phi_k^1\otimes \cdots\otimes \Phi_k^J\otimes \underbrace{\mathbf{1} \otimes\cdots\otimes \mathbf{1}}_{m-J \text{ times}},
\end{equation}
where, for $1\leq j\leq J$, 
\begin{equation}\label{e:Phiphi}
\Phi_k^j(x_j,y_j,z_j)=\phi_{j,k}(x_j,y_j)e^{i\alpha_{j,k}z_j}
\end{equation}
is an eigenfunction of $-\Delta_j$ (on the $j$-th copy of $\mathbf{H}$) with eigenvalue $|\alpha_{j,k}|$. The precise form of $\phi_{j,k}$ will be given below.

In the next paragraphs, we explain how to choose $\phi_{j,k}$ in order to ensure that $(\varphi_k)_{k\in\N^*}$ has a unique QL, which is $\mathscr{H}_{0,p}^\J$. 

We first follow some arguments of the proof of \cite[Proposition 3.2]{de2018spectral}. A Fourier expansion in the $z_j$ variable yields (see for example \cite[Section~2]{colin2calcul})
\begin{equation}\label{e:decompodeltabgamma}
-\Delta_j=\bigoplus_{\gamma\in\Z} B_\gamma, \  \ \ \ \text{where}  \ \ B_\gamma=A_\gamma^*A_\gamma+\gamma \ \text{for $\gamma\in\Z$}
\end{equation}
where the operators $A_\gamma, A_\gamma^*$ and $B_\gamma$ act on the space of functions of the form $e^{i\gamma z_j}g(x_j,y_j)$, with $A_\gamma=\partial_{x_j}+i\partial_{y_j}+\gamma x_j$ (the annihilation operator) and $A_\gamma^*=-\partial_{x_j}+i\partial_{y_j}+\gamma x_j$ (the creation operator). We have $[A_\gamma,A_\gamma^*]=2\gamma$, hence the eigenspace of $B_\gamma$ corresponding to the eigenvalue $|\gamma|$ is of the form $(\text{ker}(A_\gamma))e^{i\gamma z_j}$.

We note that the function 
\begin{equation*}
f_{j,k}(x_j,y_j)=\exp\Big(-\alpha_{j,k}\frac{x_j^2+y_j^2}{4}+\frac{i}{2}\alpha_{j,k}x_jy_j\Big)
\end{equation*} satisfies 
\begin{equation*}
\big(-\partial_{x_j}^2-(\partial_{y_j}-i\alpha_{j,k})^2\big)f_{j,k}=\alpha_{j,k} f_{j,k}
\end{equation*}
 on $\R^2$, and its mass concentrates as $k\rightarrow +\infty$ at the point $(x_j,y_j)=(0,0)$. Let $\chi:\R^2\rightarrow \R$ be a smooth cut-off function equal to $1$ near $0$ and with small support. Then 
\[ \chi(x_j,y_j)f_{j,k}(x_j,y_j) \] 
can be seen as a function on the $j$-th copy of $\mathbf{H}$. Up to multiplying $\chi f_{j,k}$ by a constant (depending on $j,k$) we can assume that its $L^2$-norm is equal to $1$. Then 
\[ B_{\alpha_{j,k}}\big(\chi f_{j,k}\big)=\alpha_{j,k}\chi f_{j,k}+o_{L^2}(1) \] 
since $B_{\alpha_{j,k}}=-\partial_{x_j}^2-(\partial_{y_j}-i\alpha_{j,k})^2$ locally. 

We denote by $\phi_{j,k}$ the projection of $\chi f_{j,k}$ on the $\alpha_{j,k}$-eigenspace of $B_{\alpha_{j,k}}$. Our goal is to prove that  
\begin{equation}\label{e:modequasimodeequal}
\phi_{j,k}=\chi f_{j,k}+o_{L^2}(1)
\end{equation}
as $k\rightarrow +\infty$. We can decompose
\begin{equation}\label{e:decompochif}
\chi f_{j,k}=\phi_{j,k}+r_{j,k}
\end{equation} 
with $r_{j,k}$ orthogonal to $\phi_{j,k}$. Applying $B_{\alpha_{j,k}}$ to \eqref{e:decompochif}, we obtain 
\begin{equation}\label{e:eqBalp}
B_{\alpha_{j,k}}r_{j,k}=\alpha_{j,k}r_{j,k}+o_{L^2}(1).
\end{equation}
We know that $B_{\alpha_{j,k}}$ has eigenvalues $(2n+1)\alpha_{j,k}, n\in\N$, hence its lowest eigenvalue $\alpha_{j,k}$ is well separated from the rest of the spectrum. This implies that 
\[ (B_{\alpha_{j,k}}r_{j,k},r_{j,k}) \geq (\alpha_{j,k}+\epsilon)\lVert r_{j,k}\rVert_{L^2}^2 \] 
for some $\epsilon>0$. Combined with \eqref{e:eqBalp}, we obtain that $r_{j,k}=o_{L^2}(1)$, which proves \eqref{e:modequasimodeequal}.

For the above choice of $\phi_{j,k}$, we consider $\varphi_k$ given by \eqref{e:defphik}. Setting
\[ f_k=\prod_{j=1}^J  \chi(x_j,y_j)f_{j,k}(x_j,y_j)e^{i\alpha_{j,k}z_j} \]
we deduce from \eqref{e:modequasimodeequal} that
\begin{equation}\label{e:quasimodemodeclose}
\varphi_k=f_k+o_{L^2}(1)
\end{equation}
 as $k\rightarrow +\infty$. 

Let $\nu$ be a QL of $(\varphi_k)_{k\in\N^*}$. We are going to prove that necessarily $\nu=\mathscr{H}_{0,p}^\J$. Firstly, 
\begin{align}
&\text{$\nu$ is supported where $x_j=y_j=0$}\text{ for any $j\in\J$;}\label{e:conclusion1}\\
&\qquad\qquad\text{$\nu$ is of the form $\nu=m^\J\otimes \ell^{\notin\J}$}\label{e:conclusion2}
  \end{align}
where $\ell^{\notin\J}$ has been introduced in Section \ref{s:mainresultssection} and $m^\J$ is a probability measure on $S^*\mathbf{H}^\J$.
  The first line comes from the fact that $|f_{j,k}|$ concentrates as $k\rightarrow +\infty$ on $x_j=y_j=0$.
The second line comes from the fact that $\varphi_k$ does not depend on $x_i,y_i,z_i$ for $i\notin \J$.

Also, let us justify that for any $n\in\N^*$,
\begin{equation} \label{e:varphikbienlocalise}
P_n^\J\varphi_k=\varphi_k
\end{equation}
 for $k$ sufficiently large (see \eqref{e:defPn} for the definition of $P_n^\J$, here $Z_i=\partial_{z_i}$). For this, we notice that 
 \begin{align*}
 \Big(\frac{\text{Id}-\Delta_{\sR}}{E}\Big)\varphi_k&=\overbrace{\Big(\frac{1+\sum_{j\in\J} |\alpha_{j,k}|}{1+\sum_{j\in\J} |\alpha_{j,k}|+|\alpha_{j,k}|^2}\Big)}^{:=\varepsilon_{k}}\varphi_k, \smallskip\\
\Big(\frac{Z_i^*Z_i}{E}\Big)\varphi_k&=0 \text{  when $i\notin \J$,}\smallskip\\
\Big(\frac{Z_j^*Z_j}{E}\Big)\varphi_k&= \underbrace{\Big(\frac{|\alpha_{j,k}|^2}{1+\sum_{j'\in\J} |\alpha_{j',k}|+|\alpha_{j',k}|^2}\Big)}_{:=\eta_{j,k}}\varphi_k \text{  when $j\in \J$}.
 \end{align*} 
Using \eqref{convalphaversp}, we see that $\varepsilon_{k}$ converges to $0$ as $k\rightarrow +\infty$, and $\eta_{j,k}$ converges for any $j\in\J$ to a non-zero limit as $k\rightarrow +\infty$. This is sufficient to deduce \eqref{e:varphikbienlocalise}.

From \eqref{e:varphikbienlocalise}, we conclude that the mass of any QL of $(\varphi_k)_{k\in\N^*}$ is contained in $S\Sigma_\J$ according to Lemma \ref{l:supportnuJemptyset} (see also Lemma \ref{l:propofPnJ}).  Applying then Lemma \ref{l:locmdm}, for any $1\leq i,j\leq J$, to the operator $\frac{\partial_{z_i}}{\partial_{z_j}}-\frac{p_i}{p_j}$, and using \eqref{convalphaversp}, we obtain that 
\begin{equation}\label{e:conclusion3}
\text{$\nu$ is supported in $\M\otimes \delta_p$.}
\end{equation}

Together with the fact that $|\varphi_k|^2$ does not depend on $z_1,\ldots,z_J$, this yields that
\begin{equation}\label{e:conclusion4}
\text{$\nu$ is invariant under $\partial_{z_1},\ldots,\partial_{z_J}$.}
\end{equation} 
Combining \eqref{e:conclusion1}, \eqref{e:conclusion2}, \eqref{e:conclusion3} and \eqref{e:conclusion4}, we obtain that $\nu=\mathscr{H}_{0,p}^\J$, which concludes the proof of Fact \ref{l:simplebuildblock1}. 
\end{proof}

\begin{proof}[Proof of Fact \ref{l:simplebuildblock23}]. 
We consider $\nu \in\myC^\J$, and we write
\[ \nu=\sum_{i\in\mathcal{F}}\beta_i\ \mathscr{H}_{q_i,p_i}^\J \]
where $\mathcal{F}$ is a finite set, $\sum_{i\in\mathcal{F}}\beta_i=1$, and for any $i\in\mathcal{F}$, $\beta_i\geq 0$, $(q_i,p_i)\in S\Sigma_\J$.
Note that if $i\neq i'$, either $\mathscr{H}_{q_i,p_i}^\J=\mathscr{H}_{q_i',p_i'}^\J$, or the supports (in $T^*\M$) of $\mathscr{H}_{q_i,p_i}^\J$ and $\mathscr{H}_{q_i',p_i'}^\J$ are disjoint. Therefore, possibly grouping terms in the above sum, we assume that the supports of $\mathscr{H}_{q_i,p_i}^\J$ and $\mathscr{H}_{q_i',p_i'}^\J$ are disjoint as soon as $i\neq i'$.

For $i\in\mathcal{F}$, using Fact \ref{l:simplebuildblock1}, we consider a sequence of eigenfunctions $(\varphi_k^i)_{k\in\N^*}$ with eigenvalues $(\lambda_k^i)_{k\in\N^*}$ and whose unique QL is $\mathscr{H}_{q_i,p_i}^\J$. According to Fact \ref{l:simplebuildblock1}, we can also assume that $\varphi_k^i \in\mathcal{H}^\J_{(0),(\alpha_{j,k}^i)}$ for some $J$-tuples $(\alpha_{j,k}^i)_{j\in\J}$. For the moment, the only condition imposed on the integers $\alpha_{j,k}^i$ is that they satisfy \eqref{convalphaversp}. For any fixed $i\in\mathcal{F}$ and any fixed $k\in\N^*$, we can multiply all the $\alpha_{j,k}^i$ by a common factor $c_{i,k}$, this does not change \eqref{convalphaversp}. Choosing these factors adequately, we can hence assume that
\begin{equation*}
\lambda_k^i:=\sum_{j\in\J}|\alpha_{j,k}^i|
\end{equation*}
does not depend on $i\in\mathcal{F}$ (but it depends on $k$). In other words,
\begin{itemize}
\item for any $1\leq j\leq J$, $\varphi_k^i$ is also an eigenvalue of $\Omega_j$ with eigenvalue $1$;
\item for any $i,i'\in\mathcal{F}$, $\lambda_k^i=\lambda_k^{i'}$ and we denote this common value by $\lambda_k$. This means that for any $i\in\mathcal{F}$, $\varphi_k^i$ belongs to the eigenspace of $-\Delta$ corrresponding to the eigenvalue $\lambda_k$.
\end{itemize}
Since $\mathscr{H}_{q_i,p_i}^\J$ and $\mathscr{H}_{q_i',p_i'}^\J$ have disjoint supports, it follows by Lemma \ref{l:joint} that the eigenfunction of $-\Delta$ with eigenvalue $\lambda_k$
\begin{equation*}
\varphi_k:=\sum_{i\in\mathcal{F}} \beta_i\varphi_k^i
\end{equation*} 
admits $\nu^\J$ as unique QL in the limit $k\rightarrow +\infty$.
\end{proof}

\subsubsection{Step 3}
Let us now finish the proof of Theorem \ref{t:conversereplacement}. Let 
\begin{equation*}
\nu_\infty=\sum_{\mathcal{J}\in\mathcal{P}\setminus \{\varnothing\}}c_\J\nu^\J
\end{equation*}
be a probability measure with $\nu^\J\in\D^\J$ and $c_\J\geq 0$ for any $\J\in\myP\setminus \{\varnothing\}$.

Let $(\varphi_k^\J)_{k\in\N^*}$ be a sequence of eigenfunctions of $-\Delta$ whose unique microlocal defect measure is $c_\J\nu^\J$. The fact that in the proof of Lemma \ref{l:buildblock1} we only impose the condition \eqref{convalphaversp} on the integers $\alpha_{j,k}$ guarantees that, for any $k\in\N^*$, one may choose all $\varphi_k^\J$, for $\J$ running over $\mathcal{P}\setminus \{\varnothing\}$, to have the same eigenvalue with respect to $-\Delta$. Therefore, 
\begin{equation*}
\varphi_k=\sum_{\J\in\mathcal{P}\setminus \{\varnothing\}} \varphi_k^\J
\end{equation*}
is also an eigenfunction of $-\Delta$. Moreover, since $S\Sigma_\J$ and $S\Sigma_{\J'}$ are disjoint for any distinct $\J,\J'\in \mathcal{P}\setminus \{\varnothing\}$, computing $(A\varphi_k,\varphi_k)$ for any $A\in\Psi^0(\M)$ in the limit $k\rightarrow +\infty$, we obtain by Lemma \ref{l:joint} that the unique QL of $(\varphi_k)_{k\in\N^*}$ is $\nu_\infty$. This concludes the proof of Theorem \ref{t:conversereplacement}. 

\section{Proof of Theorem \ref{t:goodconversereplacement}} \label{s:goodconverse}
In this Section, we finally prove Theorem \ref{t:goodconversereplacement}.  Precisely, we prove that there exists a QL $\nu$ such that the equation $\vec{\rho}_s^\J\nu=0$ is satisfied only for $\J=\{1,2\}$ and $s=(\frac12,\frac12)\in\mathbf{S}_{\{1,2\}}$.
For this, we keep the notation \eqref{e:Phiphi}. For $k\in\N^*$, we set 
\begin{equation*}
\varphi_k=C_k\big(e^{ikz_1}\phi_{1,k}(x_1,y_1)e^{ikz_2}\phi_{2,k}(x_2,y_2)+e^{i(k+1)z_1}\phi_{1,k}(x_1,y_1)e^{i(k-1)z_2}\phi_{2,k}(x_2,y_2)\big).
\end{equation*}
where $C_k>0$ is a normalizing constant, so that $\lVert \varphi_k\rVert_{L^2(\M)}=1$. Here $\phi_{1,k}$ and $\phi_{2,k}$ are eigenfunctions of $B_k$ (see \eqref{e:decompodeltabgamma}) with eigenvalue $k$. Theorem \ref{t:goodconversereplacement} will be a consequence of the following proposition:
\begin{proposition}\label{p:int}
$(\varphi_k)_{k\in\N^*}$ has a unique QL, which is
\begin{equation}\label{e:formulafornu}
\nu=C\big[(1+\cos(z_1-z_2))\delta_{x_1,x_2,y_1,y_2}\big]\otimes \delta_{p^0}
\end{equation}
where $C>0$ is a normalizing constant so that $\nu$ is a probability measure, $\delta_{x_1,x_2,y_1,y_2}$ stands for the Dirac mass on $x_1=x_2=y_1=y_2=0$, and the only non-null coordinates of $p^0=(p_{x_1}^0,p_{y_1}^0,p_{z_1}^0,\ldots,p_{x_m}^0,p_{y_m}^0,p_{z_m}^0)$ are $p^0_{z_1}=p^0_{z_2}\neq 0$.
\end{proposition}
\begin{proof}[Proof of Proposition \ref{p:int}]
Both 
\[ \mu_k^1=e^{ikz_1}\phi_{1,k}(x_1,y_1)e^{ikz_2}\phi_{2,k}(x_2,y_2) \] 
and 
\[ \mu_k^2=e^{i(k+1)z_1}\phi_{1,k}(x_1,y_1)e^{i(k-1)z_2}\phi_{2,k}(x_2,y_2) \] 
are eigenfunctions of $-\Delta$, hence $\varphi_k$ is an eigenfunction of $-\Delta$ (with associated eigenvalue $2k$). 
Let $\nu$ be a QL of $(\varphi_k)_{k\in\N^*}$. 

Firstly, we compute 
\[ |\varphi_k|^2=C_k^2|\phi_{1,k}(x_1,y_1)|^2|\phi_{2,k}(x_2,y_2)|^2(1+\cos(z_1-z_2)). \]
Denote by $\pi:S^*\M\rightarrow \M$ the canonical projection, and recall that $\pi_*\nu$ is a weak-* limit of the sequence of functions $|\varphi_k|^2$ on $\M$ (this follows by taking $A$ to run over the multiplication operators by continuous functions on $\M$ in Definition \ref{d:defmdm}). Using that the mass of $\phi_{1,k}$ (resp. $\phi_{2,k}$) concentrates at $x_1=y_1=0$ (resp. $x_2=y_2=0$) as justified in the proof of Fact \ref{l:simplebuildblock1}, we obtain that 
\begin{equation}\label{e:pinu}
\pi_*\nu=C\big[(1+\cos(z_1-z_2))\delta_{x_1,x_2,y_1,y_2}\big]
\end{equation}
 for some $C>0$.

We set $\J=\{1,2\}$. We notice that $Z_1^*Z_1=|\partial_{z_1}|^2$ and $Z_2^*Z_2=|\partial_{z_2}|^2$ act as multiplication by $|k|^2$ on $\mu_k^1$, and $Z_j^*Z_j=|\partial_{z_j}|^2$ acts as $0$ on $\mu_k^1$ for $j\notin\J$. Since $-\Delta$ acts as $2k$, we have for any fixed $n\in\N$
\[ P_n^\J\mu_1^k=\mu_1^k \]
when $k$ is sufficiently large (see definition of $P_n^\J$ in \eqref{e:defPn}). The same is true for $\mu_2^k$. Hence, $\nu$ gives no mass to $S^*\M\setminus S\Sigma_\J$ according to Lemma \ref{l:supportnuJemptyset}.

Applying Lemma \ref{l:locmdm} to the operator $\frac{\partial_{z_1}}{\partial_{z_2}}-1$ and the sequence of functions $\varphi_k$, we obtain that $\nu$ is supported where $p_{z_1}=p_{z_2}$. Due to \eqref{e:pinu}, this implies \eqref{e:formulafornu}.
\end{proof}
We now conclude the proof of Theorem \ref{t:goodconversereplacement}. We already know that $\nu$ is concentrated on $S\Sigma_\J$ for $\J=\{1,2\}$. We want now to know for which $s\in\mathbf{S}_\J$ there holds $\vec{\rho}_s^\J\nu=0$. Setting $s=(s_1,s_2,0,\ldots,0)$, according to \eqref{e:formulafornu} we have
\[ \vec{\rho}_s^\J\nu=0 \Leftrightarrow (s_1\partial_{z_1}+s_2\partial_{z_2})\cos(z_1-z_2)=0\Leftrightarrow s_1=s_2. \]
Hence $\nu$ is invariant under $\vec{\rho}_s^\J$ only for $s=(\frac12,\frac12)\in\mathbf{S}_{\{1,2\}}$.


\appendix

\section{Appendix}
\subsection{Classical pseudodifferential calculus} \label{a:pseudo}

We briefly gather some basic facts of pseudodifferential calculus used along this paper (see also \cite[Chapter XVIII]{hormander1985analysis}). 

Following our notations of Section \ref{s:intro}, we denote by $M$ a smooth compact manifold of dimension $n$. We write $\mathscr{S}_{\text{hom}}^k(M)$ for the set of positively homogeneous degree $k$ functions on the cone $T^*M\setminus \{0\}$., i.e., $a\in \mathscr{S}_{\text{hom}}^k(M)$ if $a\in C^\infty(T^*M)$ and there exists $R>0$ such that for any $(q,p)\in T^*M$ with $|p|\geq R$, and any $\lambda\geq 1$, we have $a(q,\lambda p)=\lambda^ka(q,p)$.  We also denote by $\mathscr{S}^k(M)$ the set of polyhomogeneous symbols of degree $k$. Hence, $a\in \mathscr{S}^k(M)$ if $a\in C^\infty(T^*M)$, and for any $j\in\N$ there exists $a_j\in \mathscr{S}^{k-j}_{\text{hom}}(M)$ such that for any $N\in \mathbb{N}$, $a-\sum_{j=0}^N a_j\in \mathscr{S}^{k-N-1}(M)$.

We denote by $\Psi^k(M)$ the space of classical (polyhomogeneous) pseudodifferential operators of order $k$ on $M$. The algebra $\Psi(M)$ of classical (polyhomogeneous) pseudodifferential operators on $M$ is graded according to the chain of inclusions $\Psi^{-\infty}(M)\subset\cdots\subset\Psi^{k}(M)\subset \Psi^{k+1}(M)\subset \cdots\,$.

To a pseudodifferential operator $A\in \Psi^m(M)$, we can associate its \emph{principal symbol} $\sigma_P(A)$, and the map $\sigma_P:\Psi^k(M)/\Psi^{k-1}(M)\rightarrow \mathscr{S}_{\text{hom}}^k(M)$ is bijective. A \emph{quantization} is a continuous linear mapping
\begin{equation*}
\Op:\mathscr{S}^0(M)\rightarrow \Psi^0(M)
\end{equation*}
with $\sigma_P(\Op(a))=a$. An example is obtained using partitions of unity and the standard quantization which is given in local coordinates by 
\begin{equation} \label{e:defstandardquantization}
\Op^{\text{st}}(a)f(q)=(2\pi)^{-n}\int_{\R^n\times\R^n} e^{i\langle q-q',p\rangle}a(q,p)f(q')dq'dp.
\end{equation}
This is the quantization we used by default in this paper. In $\M$, choosing local coordinates adapted to the product structure, we see that this quantization preserves the product structure: if $\J\subset\{1,\ldots,m\}$, and $a$ (resp. $a'$) depends only on the coordinates $(q_j,p_j)_{j\in\J}$ (resp. on the coordinates $(q_j,p_j)_{j\notin\J}$), then $[\Op^{\text{st}}(a),\Op^{\text{st}}(a')]=0$.

We have the following properties:
\begin{itemize}
\item If $A\in\Psi^k(M)$ and $B\in\Psi^{\ell}(M)$, then $AB\in \Psi^{k+\ell}(M)$ and $\sigma_P(AB)=\sigma_P(A)\sigma_P(B)$.
\item If $A\in \Psi^k(M)$ and $B\in \Psi^\ell(M)$, then $[A,B]\in \Psi^{k+\ell-1}(M)$ and 
\begin{equation*}
\sigma_P([A,B])=\frac1i\{\sigma_P(A),\sigma_P(B)\},
\end{equation*}
where the Poisson bracket is taken with respect to the canonical symplectic structure of $T^*M$.
\end{itemize}

\begin{lemma} \label{l:locmdm}
Let us assume that $\ell\in\N$ and $P\in \Psi^\ell(M)$ is elliptic in any cone contained in the complement of a closed conic set $F\subset T^*M$. Assume that $(u_k)_{k\in\N^*}$ is a bounded sequence in $L^2(M)$ weakly converging to $0$ and such that $Pu_k\rightarrow 0$ strongly in $L^2(M)$. Then any microlocal defect measure of $(u_k)_{k\in\N^*}$ is supported in $F$.
\end{lemma}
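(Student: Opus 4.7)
The strategy is the classical microlocal parametrix argument: show that for every symbol $a\in\mathscr{S}^0(M)$ whose conic support is disjoint from $F$, one has $(\Op(a)u_k,u_k)\to 0$, which forces any microlocal defect measure $\nu$ of $(u_k)_{k\in\N^*}$ to vanish on the complement of $F$ in $S^*M$.

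First, I fix $a\in\mathscr{S}^0(M)$ with conic support contained in the open conic set $V=T^*M\setminus(F\cup\{0\})$ (up to adding an appropriate cutoff supported away from the zero section, which changes $\Op(a)$ only by a smoothing operator and therefore only by a negligible contribution in the limit $k\to\infty$). By the ellipticity assumption, $\sigma_P(P)$ is bounded from below by $c|\xi|^\ell$ on the conic support of $a$, so the function $b_0=a/\sigma_P(P)$, extended by $0$ outside the support of $a$, defines a symbol of order $-\ell$ on $T^*M$. The standard iterative parametrix construction (see e.g.\ \cite[Chapter XVIII]{hormander1985analysis}) then yields $Q\in\Psi^{-\ell}(M)$ such that
\begin{equation*}
\Op(a)=QP+R,\qquad R\in\Psi^{-\infty}(M).
\end{equation*}

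Next, I evaluate both terms on the sequence $(u_k)_{k\in\N^*}$. Since $\ell\in\N$, $Q$ is bounded on $L^2(M)$, so
\begin{equation*}
|(QPu_k,u_k)|\leq \|Q\|_{L^2\to L^2}\,\|Pu_k\|_{L^2}\,\|u_k\|_{L^2}\underset{k\to+\infty}{\longrightarrow} 0,
\end{equation*}
using $Pu_k\to 0$ strongly in $L^2(M)$ and the boundedness of $(u_k)_{k\in\N^*}$ in $L^2(M)$. On the other hand, $R$ is a smoothing operator, hence compact as an operator on $L^2(M)$; since $u_k\rightharpoonup 0$ weakly in $L^2(M)$, compactness yields $Ru_k\to 0$ strongly in $L^2(M)$, and therefore $(Ru_k,u_k)\to 0$. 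Combining these two bounds gives $(\Op(a)u_k,u_k)\to 0$.

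Passing to any microlocal defect measure $\nu$ of the sequence, this proves that $\int_{S^*M} a\, d\nu=0$ for every $a\in\mathscr{S}^0(M)$ whose conic support is disjoint from $F$. A standard exhaustion argument (approximating the indicator of any open conic neighborhood of $S^*M\setminus F$ by such symbols) then shows that $\nu$ gives no mass to $S^*M\setminus F$, i.e.\ $\supp\nu\subset F$. The only delicate point is the parametrix construction of $Q$, which is purely symbolic and standard; the boundedness of $Q$ on $L^2$ is where the assumption $\ell\geq 0$ enters.
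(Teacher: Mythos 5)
Your proof is correct and is essentially the same parametrix argument as the paper's: both exploit the ellipticity of $P$ away from $F$ to build a microlocal parametrix $Q\in\Psi^{-\ell}(M)$, then use the $L^2$-boundedness of $Q$ (order $-\ell\leq 0$) together with $Pu_k\to 0$. The only difference is organizational: you decompose $\Op(a)=QP+R$ with $R\in\Psi^{-\infty}$ and kill the remainder via compactness and weak convergence, whereas the paper forms $Q\Op(a)P\in\Psi^0(M)$, observes its principal symbol equals $a$, and lets the definition of the microlocal defect measure absorb the lower-order terms — two renderings of the same idea.
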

\begin{proof}
Let $\mu$ be a microlocal defect measure of $(u_k)_{k\in\N^*}$, i.e.,
\begin{equation*}
(\Op(a)u_{\sigma(k)},u_{\sigma(k)})\underset{k\rightarrow+\infty}{\longrightarrow} \int_{S^*M}a d\mu
\end{equation*}
for any $a\in\mathscr{S}^0(M)$, where $\sigma$ is an extraction. Let $a\in\mathscr{S}^0(M)$ be supported outside $F$. Let $Q\in\Psi^{-\ell}(M)$ be such that $PQ-I\in\Psi^{-1}(M)$ on the support of $a$. Then $Q\Op(a)P\in\Psi^0(M)$ has principal symbol $a$, and therefore
\begin{equation*}
(Q\Op(a)Pu_{\sigma(k)},u_{\sigma(k)})\underset{k\rightarrow+\infty}{\longrightarrow} \int_{S^*M}a d\mu.
\end{equation*}
Using that $Pu_{\sigma(k)}\rightarrow0$, we get $(Q\Op(a)Pu_{\sigma(k)},u_{\sigma(k)})\rightarrow0$ as $k\rightarrow +\infty$, and therefore $\int_{S^*M}a d\mu=0$. Hence, $\mu$ is supported in $F$.
\end{proof}

\subsection{The Martinet sub-Laplacian}  \label{a:supplementary} \label{a:Martinet}

In this section, we provide an example of a sub-Laplacian on a compact manifold which satisfies Assumption \ref{a:a} but which is not step 2, meaning that brackets of length $\geq 3$ of the $X_i$ are required to generate the whole tangent bundle, see \eqref{e:hormander}. 

To this end, we consider $M=(\R/2\pi\Z)^3$ with coordinates $x,y,z$, endowed with the Lebesgue measure $d\mu=dxdydz$. Let $A$ be a smooth $1$-form $A=A_xdx+A_ydy$, where $A_x$ and $A_y$ depend only on $x$ and $y$. The $2$-form $B=dA=(\partial_xA_y-\partial_yA_x)dx\wedge dy$ is the ``magnetic field'' and $b=\partial_xA_y-\partial_yA_x$ is its ``strength''. We consider the vector fields $X_1=\partial_x+A_x\partial_z$ and $X_2=\partial_y+A_y\partial_z$. Then, $[X_1,X_2]=b\partial_z$. Now, we choose $A$ so that $b$ vanishes along a closed curve in $(\R/2\pi\Z)^2_{x,y}$, and $(\partial_xb,\partial_yb)\neq0$ along this curve. This construction is classical, see \cite{montgomery95}. When adding the $z$-variable, this yields a surface $\mathscr{S}\subset M$, called Martinet surface, on which $[X_1,X_2]=0$ but some bracket of length $3$ of $X_1,X_2$ generates the missing direction of the tangent bundle thanks to $(\partial_xb,\partial_yb)\neq0$. In other words, the sub-Laplacian has step $3$ on $\mathscr{S}$. Nevertheless, Assumption \ref{a:a} is satisfied with $Z_1=\partial_z$.

\subsection{Quantum Limits of flat contact manifolds} \label{s:contact}
The study of QLs of higher dimensional contact manifolds is also an interesting problem. In this section, we prove that for the natural sub-Laplacian defined on the quotient of the Heisenberg group $\mathbf{H}_d$ of dimension $2d+1$ by one of its discrete cocompact subgroups, the invariance properties of QLs are much simpler than those described in Theorem \ref{t:mainQL}, even though ``frequencies'' show up: the part of the QL which lies in $S\Sigma$ is  invariant under the lift of the Reeb flow, as in the three-dimensional case. 

We first define the Heisenberg group in any odd dimension and the associated sub-Laplacian. For $d\geq 1$, we consider the group law on $\R^{2d+1}$ given by 
\begin{equation*}
(x,y,z) \star (x',y',z') =(x+x',y+y',z+z'-x\cdot y')
\end{equation*}
where $x,x',y,y'\in\R^d$ and $z,z'\in\R$. The Heisenberg group $\widetilde{\mathbf{H}}_d$ is the group $\widetilde{\mathbf{H}}_d=(\R^{2d+1},\star)$. We consider the subgroup $\Gamma_d=(\sqrt{2\pi\mathbb{Z}})^{2d}\times 2\pi\mathbb{Z}$ of $\widetilde{\mathbf{H}}_d$, and the left quotient $\mathbf{H}_d=\Gamma_d\backslash \widetilde{\mathbf{H}}_d$. We also define the $2d$ left invariant vector fields on $\mathbf{H}_d$ given by
\begin{equation*}
X_j=\partial_{x_j}, \qquad Y_j=\partial_{y_j}-x_j\partial_z
\end{equation*}
for $1\leq j\leq d$. We fix $\beta_1,\ldots,\beta_d>0$ satisfying $\prod_{j=1}^d\beta_j=1$, we set $\beta=(\beta_1,\ldots,\beta_d)$ and we consider the sub-Laplacian 
\begin{equation} \label{e:Deltabeta}
\Delta_{\beta}=\sum_{j=1}^d\beta_j(X_j^2+Y_j^2)
\end{equation}
which is an operator acting on functions on $\mathbf{H}_d$. The positive real numbers $\beta_j$ are sometimes called frequencies, see \cite{agrachev1996exponential}.

We set $\rho=h_Z|_{\Sigma}$, which is the Hamiltonian lift of the Reeb vector field $Z=\partial_z$ to $\Sigma$ (see \cite[Section 2.3]{de2018spectral} for properties of the Reeb vector field).
\begin{proposition} \label{p:contact}
Let $(\varphi_k)_{k\in\N^*}$ be a sequence of $L^2(\mathbf{H}_d)$ consisting of normalized eigenfunctions of $-\Delta_\beta$. Then, any QL $\nu_\infty$ associated to $(\varphi_k)_{k\in\N^*}$ and supported in $S\Sigma$ is invariant under $e^{t\vec\rho}$, the lift of the Reeb flow.
\end{proposition}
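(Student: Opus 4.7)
The plan is to adapt the averaging argument of Lemma \ref{l:propnuellN} to a commuting family attached to $\Delta_\beta$. I set $R = \sqrt{-\partial_z^2}$ and $\Omega_j = -R^{-1}(X_j^2 + Y_j^2)$ for $j = 1, \ldots, d$, defined on the non-zero $\partial_z$-modes. These pairwise commute and commute with $-\Delta_\beta$; each $\Omega_j$ has spectrum in the odd positive integers (so $e^{2\pi i \Omega_j} = \text{Id}$); and the principal symbols $\sigma_P(\Omega_j) = (p_{x_j}^2 + (p_{y_j} - x_j p_z)^2)/|p_z|$, together with their Hamiltonian vector fields, vanish identically on $\Sigma$.

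By density it suffices to show $\int_{S\Sigma} \partial_z a\,d\nu_\infty = 0$ for every $a \in \mathscr{S}^0(\mathbf{H}_d)$ supported in a conic subset of $T^*\mathbf{H}_d$ avoiding a conic neighborhood of $\{p_z = 0\}$; splitting by the sign of $p_z$, I may assume $\text{supp}(a) \subset \{p_z > 0\}$. On this support one computes $\sigma_P([R, \Op(a)]) = \tfrac{1}{i}\{|p_z|, a\} = \tfrac{1}{i}\partial_z a$, so the target reduces to $\lim_k ([R, \Op(a)]\varphi_k, \varphi_k) = 0$. I would then introduce the averaged operator
\begin{equation*}
A = \frac{1}{(2\pi)^d} \int_{[0, 2\pi]^d} U(-t)\,\Op(a)\,U(t)\,dt, \qquad U(t) = e^{i(t_1\Omega_1 + \cdots + t_d \Omega_d)},
\end{equation*}
which lies in $\Psi^0(\mathbf{H}_d)$, commutes with every $\Omega_j$, and satisfies $\sigma_P(A)|_\Sigma = a|_\Sigma$ because the Hamiltonian flows of $\sigma_P(\Omega_j)$ are stationary on $\Sigma$. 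Since $\partial_z$ is tangent to $\Sigma$ and $a - \sigma_P(A)$ vanishes along $\Sigma$, the principal symbol of $[R, \Op(a) - A]$ also vanishes on $\Sigma$, and hence $\lim_k ([R, \Op(a) - A]\varphi_k, \varphi_k) = 0$ thanks to the concentration of $\nu_\infty$ on $S\Sigma$.

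It thus remains to prove $([R, A]\varphi_k, \varphi_k) = 0$ for every $k$. The key ingredient is the factorization $-\Delta_\beta = c_{(n_j)} R$ holding on the joint eigenspace $\mathcal{H}_{(n_j)}$ of $(\Omega_1, \ldots, \Omega_d)$ corresponding to the tuple of eigenvalues $(2n_j + 1)$, where $c_{(n_j)} = \sum_j \beta_j(2n_j + 1)$. Decomposing $\varphi_k = \sum_{(n_j)} \varphi_{k, (n_j)}$ and using that $R$ also commutes with each $\Omega_j$ (so both $R$ and $A$ preserve each $\mathcal{H}_{(n_j)}$), the cross terms in $([R, A]\varphi_k, \varphi_k)$ between distinct $\mathcal{H}_{(n_j)}$ and $\mathcal{H}_{(n_j')}$ vanish by orthogonality. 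For the diagonal terms, $\varphi_{k, (n_j)}$ is an $R$-eigenvector with eigenvalue $\mu_{k, (n_j)} = \lambda_k / c_{(n_j)}$, and the selfadjointness of $R$ yields
\begin{equation*}
(RA\varphi_{k, (n_j)}, \varphi_{k, (n_j)}) = (A\varphi_{k, (n_j)}, R\varphi_{k, (n_j)}) = \mu_{k, (n_j)}(A\varphi_{k, (n_j)}, \varphi_{k, (n_j)}) = (AR\varphi_{k, (n_j)}, \varphi_{k, (n_j)}),
\end{equation*}
so the diagonal contributions also cancel.

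The main obstacle is the microlocal nature of $R$ and the $\Omega_j$: they act as pseudodifferential operators of order $1$ only on conic sets avoiding $\{p_z = 0\}$, which forces careful symbol cutoffs and restricts the class of admissible test symbols; the symbolic identities $\sigma_P(A)|_\Sigma = a|_\Sigma$ and $\sigma_P([R,\Op(a)-A])|_\Sigma = 0$ must be verified microlocally. A secondary but necessary point is to check that the zero $\partial_z$-mode of $\varphi_k$ (on which $-\Delta_\beta$ reduces to an elliptic Laplacian in $(x, y)$) contributes only a microlocal defect measure supported in $U^*M$, so it does not interfere with $\nu_\infty|_{S\Sigma}$.
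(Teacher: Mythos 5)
Your proof is correct and follows the same averaging strategy as the paper: conjugate $\Op(a)$ by the periodic joint flow $U(t)=e^{i\sum t_j\Omega_j}$ to produce an $A$ commuting with the $\Omega_j$'s and whose symbol agrees with $a$ on $\Sigma$, then show the commutator $[R,A]$ pairs to zero against the eigenfunctions. The only presentational difference is in the finishing step: you decompose $\varphi_k$ into joint $\Omega_j$-eigenspaces $\mathcal{H}_{(n_j)}$ on which $-\Delta_\beta=c_{(n_j)}R$, so each piece is an $R$-eigenvector and the commutator vanishes term by term; the paper instead kills $([\widetilde{A},R]\varphi_k,\varphi_k)$ in one line via $-\Delta_\beta=R\Omega$ and the algebraic identity $[\Omega,R\widetilde{A}R]=0$. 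Both arguments rest on the same factorization and are equivalent; the paper's is terser, while yours mirrors the spectral-decomposition viewpoint used in the proof of Theorem \ref{t:mainQL}.
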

\begin{remark}
This result follows from \cite[Theorem 2.10(ii)(2)]{fermanian2020quantum}, but we provide here a simple self-contained proof which illustrates the averaging techniques used in Section \ref{s:prooftheo}.
\end{remark}
\begin{remark}
We do not expect such a result to be true when the frequencies $\beta_j$ are not constant on the manifold.
\end{remark}
\begin{proof}[Proof of Proposition \ref{p:contact}]
Denoting by $(q,p)$ the canonical coordinates in $T^*\mathbf{H}_d$, i.e., $q=(x_1,\ldots,x_d,y_1,\ldots,y_d,z)$ and $p=(p_{x_1},\ldots,p_{x_d},p_{y_1},\ldots,p_{y_d},p_{z})$, we know that
\begin{equation*}
\Sigma=\{(q,p)\in T^*\mathbf{H}_d, \ p_{x_j}=p_{y_j}-x_jp_{z}=0\}
\end{equation*} 
is isomorphic to $\mathbf{H}_d\times \R$.

Up to extraction of a subsequence, we may assume that $(\varphi_{k})_{k\in\N^*}$ has a unique QL $\nu_\infty$, which is supported in $S\Sigma$. We set $R=\sqrt{\partial_z^*\partial_z}$ and, on its eigenspaces corresponding to non-zero eigenvalues, we define $\Omega_j=-R^{-1}(X_j^2+Y_j^2)=-(X_j^2+Y_j^2)R^{-1}$ for $1\leq j\leq d$. On these eigenspaces, the sub-Laplacian acts as
\begin{equation*}
-\Delta_\beta = R\Omega=\Omega R\qquad \text{with} \quad \Omega=\sum_{j=1}^d\beta_j\Omega_j
\end{equation*}
and $[R,\Omega]=0$. 

Replacing $\varphi_k$ by $\chi(\frac{R^2}{\text{Id}-\Delta_\beta+R^2})\varphi_k$ for some smooth function $\chi\in C^\infty(\R)$ vanishing near $0$ and equal to $1$ near in a neighborhood of $1$ does not change the QL since $\nu_\infty$ is supported in $S\Sigma$. In the sequel, an operator $T$ is said microlocally supported in the $\chi$-neighborhood of $\Sigma$ if it verifies $T=\chi(\frac{R^2}{\text{Id}-\Delta_\beta+R^2})T\chi(\frac{R^2}{\text{Id}-\Delta_\beta+R^2})$.

If $B\in\Psi^0(\mathbf{H}_d)$ is microlocally supported in the $\chi$-neighborhood of $\Sigma$ and commutes with $\Omega$, then
\begin{align}
([B,R]\varphi_k,\varphi_k)&= \frac{1}{\lambda_k}(BR\varphi_k,-\Delta_\beta\varphi_k)-\frac{1}{\lambda_k}(RB(-\Delta_\beta)\varphi_k,\varphi_k) \nonumber \\
&= \frac{1}{\lambda_k}(BR\varphi_k, R\Omega\varphi_k)-\frac{1}{\lambda_k}(RBR\Omega\varphi_k,\varphi_k) \nonumber \\
&= \frac{1}{\lambda_k}([\Omega,RBR]\varphi_k,\varphi_k)\nonumber \\
&=0. \label{e:[B,R]=0}
\end{align}
Let $U(t)=U(t_1,\ldots,t_d)=e^{i(t_{1}\Omega_{1}+\cdots+t_d\Omega_d)}$ for $t=(t_1,\ldots,t_d)\in(\R/2\pi\Z)^d$. For $A\in \Psi^0(\mathbf{H}_d)$ microlocally supported in the $\chi$-neighborhood of $\Sigma$, we consider
\begin{equation*}
\widetilde{A}=\int_{(\R/2\pi\mathbb{Z})^d} U(-t)AU(t)dt
\end{equation*}
which is also microlocally supported in the $\chi$-neighborhood of $\Sigma$. We argue as in Section \ref{s:spec2}:  due to the definition of $\Sigma$, 
\[ \sigma_P(\Omega_j)=h_{R}^{-1}(h_{X_j}^2+h_{Y_j}^2) \] 
vanishes at order $2$ on $\Sigma$. Thus the Hamiltonian vector field associated to $\sigma_P(\Omega_j)$  vanishes on $\Sigma$, and the associated Hamiltonian flow is stationary on $\Sigma$.

Using Egorov's theorem as in the proof of Lemma \ref{l:propnuellN}, we deduce that $\sigma_P(A)$ and $\sigma_P(\widetilde{A})$ coincide on $\Sigma$. Moreover, as in the proof of Lemma \ref{l:propnuellN}, $[\widetilde{A},\Omega]=0$. Therefore, using the computation \eqref{e:[B,R]=0} with $B=\widetilde{A}$, we obtain
\begin{equation*}
\int_{\Sigma}\vec{\rho}(\sigma_P(A))d\nu_\infty=\int_{\Sigma}\vec{\rho}(\sigma_P(\widetilde{A}))d\nu_\infty=\lim_{k\rightarrow +\infty}\frac1i ([\widetilde{A},R]\varphi_k,\varphi_k)=0.
\end{equation*}
Combining the facts that it is true for any $A$ microlocally supported in the $\chi$-neighborhood of $\Sigma$ and that $\nu_\infty$ is supported in $S\Sigma$, this implies that $\nu_\infty$ is invariant under the flow $e^{t\vec\rho}$.
\end{proof}

\begin{ack}
I am very grateful to Emmanuel Tr\'elat and Yves Colin de Verdi\`ere, who taught me so much about the subject and answered countless questions I asked in Paris and in Grenoble, and to Suresh Eswarathasan, Luc Hillairet and Clotilde Fermanian Kammerer for numerous remarks and suggestions. I also thank Richard Lascar, Nicolas Lerner, Richard Montgomery and Gabriel Rivi\`ere for very interesting discussions, and the kind hospitality of Luigi Ambrosio and the Scuola Normale Superiore in Pisa, where part of this work was done. Finally, I am very grateful to an anonymous referee for his careful reading of the manuscript and his many suggestions.
\end{ack}

\begin{funding}
This work was partially supported by the grant ANR-15-CE40-0018 of the ANR (project SRGI). 
\end{funding}

\bibliographystyle{abbrv}

\end{document}